\documentclass[reqno]{article}
\usepackage{DejaVuSerif}
\usepackage{noto-sans}
\usepackage{noto-mono}
\usepackage[T2A]{fontenc}
\usepackage[utf8]{inputenc}
\usepackage{color}
\usepackage{amsthm}
\usepackage{amssymb}
\usepackage[pdfusetitle,
 bookmarks=true,bookmarksnumbered=false,bookmarksopen=false,
 breaklinks=false,pdfborder={0 0 1},backref=false,colorlinks=true]
 {hyperref}

\makeatletter


\DeclareTextSymbolDefault{\textquotedbl}{T1}

\newcommand{\lyxaddress}[1]{
	\par {\raggedright #1
	\vspace{1.4em}
	\noindent\par}
}
\theoremstyle{remark}
\newtheorem{rem}{\protect\remarkname}
\theoremstyle{remark}
\newtheorem{claim}{\protect\claimname}
\theoremstyle{plain}
\newtheorem{thm}{\protect\theoremname}
\theoremstyle{plain}
\newtheorem{lem}{\protect\lemmaname}
\theoremstyle{plain}
\newtheorem{cor}{\protect\corollaryname}

\usepackage{pifont}
\usepackage{amstext}
\usepackage[tbtags]{amsmath}
\renewcommand{\-}{-}

\makeatother

\usepackage[english,russian]{babel}
\providecommand{\claimname}{Утверждение}
\providecommand{\corollaryname}{Следствие}
\providecommand{\lemmaname}{Лемма}
\providecommand{\remarkname}{Замечание}
\providecommand{\theoremname}{Теорема}

\begin{document}
\title{On mixing actions of locally compact groups\thanks{Данное исследование выполнено в рамках государственного задания в
сфере научной деятельности Министерства науки и высшего образования
РФ на тему \textquotedbl Модели, методы и алгоритмы искусственного
интеллекта в задачах экономики для анализа и стилизации многомерных
данных, прогнозирования временных рядов и проектирования рекомендательных
систем\textquotedbl , номер проекта FSSW-2023-0004.}}
\author{S. V. Tikhonov}
\maketitle

\lyxaddress{ФГБОУ ВО «РЭУ им. Г.В. Плеханова», г.Москва\\ Губкинский университет,
г. Москва}
\begin{abstract}
In this paper, we construct the leash-metric that transforms the set
of (partially) mixing actions of a Hausdorff locally compact group
with a countable neighborhood base into a complete separable metric
space.
\end{abstract}

\section*{введение}
В работе \cite{Tikh2013JDCS} изучены вопросы метризации пространства перемешивающих действий дискретных групп.
перемешивающих действий дискретных групп. Построенная там поводок-метрика
позволяет использовать для перемешивающих действий понятие типичности,
неприменимое при использовании стандартной слабой топологии. Работы
\cite{Tik11,Ryz20} показывают, что те же вопросы представляют интерес
как для несчетных групп, так и для частично перемешивающих действий.
В последнем случае результаты имеют приложения, в частности, к спектральной
теории преобразований. Еще одной мотивировкой работы является то,
что на практике слабая топология для групп может быть задана различными
метриками. 

Мы обобщаем результат работы \cite{Tikh2013JDCS} на более широкий
класс метрик, групп и действий, включающий метрики и действия из работ
\cite{Tik11} и \cite{Ryz20}. 

Для формулировки основного результата нам понадобятся некоторые определения. 

Пусть $\left(X,\Sigma,\mu\right)$ --- сепарабельное пространство
Лебега и $\left\{ A_{i}\right\} $ --- счетный набор множеств порождающий
$\sigma$-алгебру $\Sigma$. Мера $\mu$ непрерывна и нормирована.
Через $\mathcal{A}$ обозначим множество обратимых, сохраняющих меру
преобразований пространства $\left(X,\Sigma,\mu\right)$. Два преобразования
считаются совпадающими, если они отличаются лишь на множестве нулевой
меры.

На $\mathcal{A}$ определены две метрики, задающие слабую топологию:
\[
d\left(T,S\right)=\sum_{i\in\mathbb{N}}\frac{1}{2^{i}}\left(\mu\left(TA_{i}\bigtriangleup SA_{i}\right)+\mu\left(T^{-1}A_{i}\bigtriangleup S^{-1}A_{i}\right)\right),
\]
и 
\[
a\left(T,S\right)=\sum_{i,j\in\mathbb{N}}\frac{1}{2^{i+j}}\left|\mu\left(TA_{i}\cap A_{j}\right)-\mu\left(SA_{i}\cap A_{j}\right)\right|.
\]

Пусть $\mathcal{G}$ --- локально компактная хаусдорфова группа со
счетной базой окрестностей, $\Gamma$ --- ее неограниченное подмножество,
а $\left\{ K_{i}\right\} $ --- не более чем счетный набор компактов
с непустой внутренностью, покрывающий все образующие группы $\mathcal{G}$. 

\emph{Действием} группы $\mathcal{G}$ называется набор преобразований
$T=\left\{ T^{g}\right\} _{g\in\mathcal{G}}$, для которого $T^{g}T^{h}=T^{gh}$
при всех $g,h\in\mathcal{G}$ и отображение $g\mapsto\mu\left(T^{g}A\cap B\right)$
непрерывно для любых $A,B\in\Sigma$. 

Действие $T$ группы $\mathcal{G}$ называется\emph{ Г\-перемешивающим},
если для любых $A,B\in\Sigma$, имеем $\mu\left(T^{g}A\cap B\right)\to\mu\left(A\right)\mu\left(B\right)$
при $g\in\Gamma,\left|g\right|\to\infty$. 

Основной результат работы (теорема \ref{GL}) следующий: 

\emph{Множество $\mathcal{M}_{\mathcal{G},\Gamma}$ всех $\Gamma$-перемешивающих
$\mathcal{G}$-действий является полным сепарабельным пространством
относительно метрики 
\[
\mathrm{m}_{\mathcal{G},\Gamma}\left(T,S\right)=\sum_{i}\frac{1}{2^{i}}\sup_{g\in K_{i}}\mathrm{d}\left(T^{g},S^{g}\right)+\sup_{g\in\Gamma}\mathrm{a}\left(T^{g},S^{g}\right).
\]
}
\begin{rem}
Существование набора $\left\{ K_{i}\right\} $ гарантируется структурой
группы, так как она имеет счетное компактное покрытие. Однако, выбирать
набор можно по\-разному. Для компактно порожденной группы достаточно
одного элемента $K_{1}$. Такая метрика использовалась, например,
в работе \cite{Tik11}. 

Для $\mathbb{R}$\-действий (потоков) в качестве $K_{1}$ обычно
рассматривается отрезок $\left[0,1\right]$, для $\mathbb{R}^{n}$\-действий
--- единичный куб $\left[0,1\right]^{n}$.

Счетный набор $\left\{ K_{i}\right\} $ используется, например, для
определения метрики на действиях общей дискретной группы $\mathcal{G}$,
когда каждое множество $K_{i}$ состоит из одного элемента $\mathcal{G}$.
Такая метрика, рассматривается в \cite{GlWeissT} и \cite{Tikh2013JDCS}.
\end{rem}
\begin{rem}
Элементы множества $\mathcal{M}_{\mathcal{G},\mathcal{G}}$ называются
(сильно) перемешивающими действиями группы $\mathcal{G}$. Теорема
предоставляет метрику для этого пространства. Однако, иногда сильно
перемешивающие действия можно метризовать проще, с помощью дискретного
множества $\Gamma$. Например, все $\mathbb{Z}$-перемешивающие действия
группы $\mathbb{R}$ являются сильно перемешивающими и для них более
удобной в применениях будет метрика $\mathrm{m}_{\mathbb{R},\mathbb{Z}}\left(T,S\right)$,
а не $\mathrm{m}_{\mathbb{R},\mathbb{R}}\left(T,S\right)$. 
\end{rem}
\begin{rem}
В работе \cite{Tik11} рассматривалась $\mathrm{m}_{\mathcal{H},\Gamma}$
метрика для действий некоммутативной группы $\mathcal{H}$ с двумя
образующими $g_{0},g_{1}$ такими, что $g_{0}^{n}=e$ и 
\[
g_{1},g_{0}^{-1}g_{1}g_{0},...,g_{0}^{1-n}g_{1}g_{0}^{n-1},
\]
попарно коммутируют. В качестве $\Gamma$ бралась подгруппа, порожденная
элементом $\left(g_{0}g_{1}\right)^{n}$. 
\end{rem}
В основном поводок\-метрика используется для установления типичных
свойств перемешивающих действий и преобразований. Напомним, что свойство
$P$ в полном сепарабельном метрическом пространстве называется \emph{типичным},
если им обладают элементы некоторого всюду плотного $G_{\delta}$\-множества
(то есть счетного пересечения всюду плотных открытых множеств). В
этом случае говорят, что \textquotedbl типичный элемент обладает
свойством $P$\textquotedbl . Применения слабой топологии для изучения
типичных свойств перемешивающих действий недостаточно. В пространствах
$\mathcal{M}_{\mathcal{G},\Gamma}$, напротив они широко исследуются.
Известно большое количество фактов о типичных перемешивающих преобразованиях,
см. \cite{Ryz24,Bashtanov2013,Tik07} (например, они кратно перемешивают),
для типичного потока установлено, что он имеет ранг 1, см. \cite{Ryz20}.
Для действий других групп установлены лишь разрозненные факты. В частности,
для типичного $\Gamma$\-перемешивающего действия $T$ упомянутой
выше группы $\mathcal{H}$, перемешивающее преобразование $T^{\left(g_{0}g_{1}\right)^{n}}$
обладает однородным спектром кратности $n$.

Общее исследование типичности требует хороших аппроксимационных инструментов.
Не для всех исследуемых групп они имеются даже в случае слабой топологии.
Однако, одно типичное свойство мы докажем в общем случае.

Множество действий вида $U^{-1}TU$, где $U\in\mathcal{A}$, называется
\emph{орбитой} точки $T$.

Типичность элементов полного сепарабельного метрического пространства
с плотной орбитой называется \emph{слабым рохлинским свойством}. Мы
покажем, что пространства $\mathcal{M}_{\mathcal{G},\Gamma}$ этим
свойством обладают.

Свойство $P$ называется \emph{динамическим}, если оно сохраняется
при сопряжениях. При наличии слабого рохлинского свойства, элементы,
обладающие свойством $P$ (при выполнении некоторых необременительных
требований) либо типичны, либо являются множеством первой категории.

Структура работы следующая.

В первой части работы вводится поводок\-метрика и доказывается теорема
о полноте и сепарабельности пространства $\mathcal{M}_{\mathcal{G},\Gamma}$.

Вторая часть работы посвящена возникающим в пространстве $\mathcal{M}_{\mathcal{G},\Gamma}$
топологиям, содержит ряд технических результатов о вспомогательных
метриках и предметриках. Описана база топологии, не использующая метрики
$\mathrm{m}$. 

В третьей часть работы для пространства $\mathcal{M}_{\mathcal{G},\Gamma}$
доказано слабое рохлинское свойство.

\section{Метрики}

\subsection*{Общие обозначения и замечания}

Для множества всех действий группы $\mathcal{G}$ будем использовать
обозначение $\mathcal{A}_{\mathcal{G}}$. 

Группу $\mathcal{G}$ будем считать группой по умножению с нейтральным
элементом $e$.

В некоторых утверждениях будут использоваться сразу несколько метрик
(или предметрик) в различных пространствах. Поэтому мы будем использовать
префикс \textquotedbl$\mathrm{m}-$\textquotedbl , если объект
рассматривается в метрике $\mathrm{m}$, \textquotedbl$\mathrm{d}-$\textquotedbl ,
если в метрике $\mathrm{d}$ и тому подобное. Например, $\mathrm{m}$\-замыкание
означает замыкание в метрике $\mathrm{m}$. Также будет использоваться
обозначение $\left(\mathrm{h},T,\varepsilon\right)$ для $\varepsilon$\-окрестности
действия $T$ в предметрике $\mathrm{h}$. 

Значок $\overset{\varepsilon}{\sim}$ означает, что величины отличаются
меньше чем на $\varepsilon$. 

Метрику $\mathrm{a}$ можно рассматривать как метрику слабой операторной
топологии. Действительно, каждая мера $\mu\left(TA\cap B\right)$
равна скалярному произведению $\left\langle T\chi_{A},\chi_{B}\right\rangle $,
в котором $T$ --- купмановский оператор, связанный с одноименным
преобразованием, а $\chi_{A}$ и $\chi_{B}$ --- индикаторы множеств
$A$ и $B$.

Тогда 
\[
\mathrm{a}\left(T,S\right)=\sum_{i,j}\frac{1}{2^{i+j}}\left|\left\langle T\chi_{A_{i}},\chi_{A_{j}}\right\rangle -\left\langle S\chi_{A_{i}},\chi_{A_{j}}\right\rangle \right|.
\]
Записанное справа выражение --- метрика слабой операторной топологии.
Вследствие такой двойственности метрики $\mathrm{a}$, мы будем ее
применять не только к преобразованиям, но и к операторам и, более
того, к паре (преобразование\-оператор). В последнем случае преобразование
заменяется соответствующим унитарным оператором. Например, преобразование
$T$ перемешивает, если 
\[
\mathrm{a}\left(T^{i},\Theta\right)\to0,
\]
при $i\to\infty$ (здесь и далее $\Theta$ --- ортопроектор на подпространство
констант в $L_{2}\left(X\right)$).

Кроме метрики $\mathrm{a}$ на пространстве унитарных операторов можно
также рассматривать и некоторые построенные с ее помощью метрики и
предметрики. 

Все рассматриваемые метрики и предметрики зависят от некоторого плотного
в $\Sigma$ набора множеств $\left\{ A_{i}\right\} _{i\in\mathbb{N}}$.
Плотность означает, что для любых $\varepsilon>0$ и $A\in\Sigma$
существует такое $i\in\mathbb{N}$, что 
\[
\mu\left(A_{i}\bigtriangleup A\right)<\varepsilon.
\]

Набор $\left\{ A_{i}\right\} $ фиксирован на протяжении всей работы.
Также фиксирован не более чем счетный набор компактных множеств с
непустой внутренностью $\left\{ K_{i}\right\} $, покрывающий некоторое
множество, порождающее $\mathcal{G}$.

Последовательность предметрик $\left\{ \mathrm{b}_{n}\right\} $ назовем
\emph{возрастающей}, если для любых элементов пространства $T$ и
$S$, 
\[
n<m\Rightarrow\mathrm{b}_{n}\left(T,S\right)\leq\mathrm{b}_{m}\left(T,S\right).
\]

Предметрику $\mathrm{b}$ назовем \emph{равномерным пределом} последовательности
$\left\{ \mathrm{b}_{n}\right\} $, если для любого $\varepsilon>0$,
существует такое число $n$, что 
\[
n<m\Rightarrow\forall T,S:\mathrm{b}_{m}\left(T,S\right)\overset{\varepsilon}{\sim}\mathrm{b}\left(T,S\right).
\]

\begin{claim}
Пусть возрастающая последовательность предметрик $\left\{ \mathrm{b}_{n}\right\} $
равномерно сходится к предметрике $\mathrm{b}$. Тогда, $\mathrm{b}$\-топология
задается базой окрестностей 
\[
\left\{ \left(\mathrm{b}_{n},T,\varepsilon\right)\right\} _{T,n,\varepsilon}.
\]
\end{claim}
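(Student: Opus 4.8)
The plan is to show that the two families of balls induce the same neighborhood filter at every point, working directly with neighborhood bases rather than with open sets; since the premetrics $\mathrm{b}_n$ and $\mathrm{b}$ need not be separating, I deliberately avoid asserting that an individual ball is open and argue only at the level of neighborhoods.

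First I would record the two structural inputs. Because $\{\mathrm{b}_n\}$ is increasing, for fixed $T,S$ the numbers $\mathrm{b}_n(T,S)$ are nondecreasing and, by uniform (hence pointwise) convergence, tend to $\mathrm{b}(T,S)$; a nondecreasing sequence never exceeds its limit, so $\mathrm{b}_n(T,S)\le\mathrm{b}(T,S)$ for all $n$ and all $T,S$. This monotone bound immediately yields the inclusion $\left(\mathrm{b},T,\varepsilon\right)\subseteq\left(\mathrm{b}_n,T,\varepsilon\right)$ for every $n$ and $\varepsilon$, since $\mathrm{b}_n(T,S)\le\mathrm{b}(T,S)<\varepsilon$ whenever $S\in\left(\mathrm{b},T,\varepsilon\right)$. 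In particular each set $\left(\mathrm{b}_n,T,\varepsilon\right)$ contains the $\mathrm{b}$-ball $\left(\mathrm{b},T,\varepsilon\right)$ and is therefore a $\mathrm{b}$-neighborhood of $T$.

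The second input is uniform convergence in the sharp form $0\le\mathrm{b}-\mathrm{b}_n<\varepsilon/2$. Given $\varepsilon>0$, I choose an index $n$ with $\mathrm{b}(T',S')-\mathrm{b}_n(T',S')<\varepsilon/2$ for all $T',S'$. Then for any $S$ with $\mathrm{b}_n(T,S)<\varepsilon/2$ one gets $\mathrm{b}(T,S)=\mathrm{b}_n(T,S)+\left(\mathrm{b}(T,S)-\mathrm{b}_n(T,S)\right)<\varepsilon/2+\varepsilon/2=\varepsilon$, that is, $\left(\mathrm{b}_n,T,\varepsilon/2\right)\subseteq\left(\mathrm{b},T,\varepsilon\right)$. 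I emphasize that this step uses neither a triangle inequality nor symmetry, only the uniform gap estimate.

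Combining the two inclusions shows that at each point $T$ the family $\left\{\left(\mathrm{b}_n,T,\varepsilon\right)\right\}_{n,\varepsilon}$ and the family of $\mathrm{b}$-balls $\left\{\left(\mathrm{b},T,\varepsilon\right)\right\}_{\varepsilon}$ are mutually cofinal: every $\mathrm{b}$-ball contains some $\left(\mathrm{b}_n,T,\varepsilon/2\right)$, and every $\left(\mathrm{b}_n,T,\varepsilon\right)$ contains the $\mathrm{b}$-ball $\left(\mathrm{b},T,\varepsilon\right)$. Since the $\mathrm{b}$-topology is by definition the topology whose neighborhood base at each point $T$ consists of the balls $\left(\mathrm{b},T,\varepsilon\right)$, mutual cofinality forces $\left\{\left(\mathrm{b}_n,T,\varepsilon\right)\right\}_{n,\varepsilon}$ to generate the same neighborhood filter at $T$ and hence to be a neighborhood base there as well; letting $T$ range over the space proves that $\left\{\left(\mathrm{b}_n,T,\varepsilon\right)\right\}_{T,n,\varepsilon}$ is a base of the $\mathrm{b}$-topology. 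The only point needing care — and the mild obstacle — is the bookkeeping of which family refines which: the monotone bound makes the $\mathrm{b}_n$-balls the \emph{larger} sets, so it is the uniform-convergence estimate, not monotonicity, that supplies the crucial shrinking $\left(\mathrm{b}_n,T,\varepsilon/2\right)\subseteq\left(\mathrm{b},T,\varepsilon\right)$.
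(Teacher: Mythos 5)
Your proof is correct and follows essentially the same route as the paper's: both arguments rest on exactly the same two inclusions, namely $\left(\mathrm{b},T,\varepsilon\right)\subseteq\left(\mathrm{b}_{n},T,\varepsilon\right)$ from monotonicity and $\left(\mathrm{b}_{n},T,\frac{\varepsilon}{2}\right)\subseteq\left(\mathrm{b},T,\varepsilon\right)$ from the uniform gap estimate. The only difference is in the final packaging: the paper concludes by asserting that the $\mathrm{b}_{n}$-balls are open in the $\mathrm{b}$-topology (which implicitly uses the triangle inequality for $\mathrm{b}_{n}$ at points other than the center), whereas you phrase the same two inclusions as mutual cofinality of neighborhood filters, a slightly more careful but equivalent way to reach the same conclusion.
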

\begin{proof}
Очевидно, что для любых $T,S$ и $n$ выполнено неравенство 
\[
\mathrm{b}_{n}\left(T,S\right)\leq\mathrm{b}\left(T,S\right).
\]
Тогда, $\left(\mathrm{b},T,\varepsilon\right)\subset\left(\mathrm{b}_{n},T,\varepsilon\right)$
для всех $T,n,\varepsilon>0$. Следовательно, множества $\left(\mathrm{b}_{n},T,\varepsilon\right)$
являются открытыми в $\mathrm{b}$\-топологии. Покажем, что они образуют
базу. 

Так как последовательность $\left\{ \mathrm{b}_{n}\right\} $ сходится
равномерно, существует такое $n$, что 
\[
\forall S:\mathrm{b}\left(T,S\right)-\mathrm{b}_{n}\left(T,S\right)<\frac{\varepsilon}{2}.
\]
Для $S\in\left(\mathrm{b}_{n},T,\frac{\varepsilon}{2}\right)$, имеем
\[
\mathrm{b}\left(T,S\right)<\mathrm{b}_{n}\left(T,S\right)+\frac{\varepsilon}{2}<\varepsilon.
\]
Таким образом, $\left(\mathrm{b}_{n},T,\frac{\varepsilon}{2}\right)\subset\left(\mathrm{b},T,\varepsilon\right)$. 
\end{proof}
Мы будем рассматривать два вида топологий --- слабую и поводок\-топологию
и их различные метризации.

\subsection*{Метрики и предметрики пространства $\mathcal{A}$}

Доказательства приведенных ниже фактов о пространстве $\mathcal{A}$
имеются в \cite{Halmosh} и \cite{Tik07}.

В работе не понадобится стандартное определение базы слабой топологии.
Известно, что эта топология метризуема любой из метрик $\mathrm{a}$
и $\mathrm{d}$, следовательно можно использовать порожденные ими
базы. 

Ниже перечислено несколько известных свойств пространства $\mathcal{A}$: 
\begin{itemize}
\item Метрика $\mathcal{A}$ является равномерным пределом возрастающей
последовательности предметрик 
\[
\mathrm{a}_{n}\left(T,S\right)=\sum_{i\leqslant n,j\leqslant n}\frac{1}{2^{i+j}}\left|\mu\left(TA_{i}\cap A_{j}\right)-\mu\left(SA_{i}\cap A_{j}\right)\right|,
\]
при $n\rightarrow\infty$ (грубая оценка показывает, что $\mathrm{a}\overset{\frac{1}{2^{n-1}}}{\sim}\mathrm{a}_{n}$).
Поэтому, слабая топология может быть задана базой окрестностей 
\[
\left\{ \left(\mathrm{a}_{n},T,\varepsilon\right)\right\} _{T\in\mathcal{A},n\in\mathbb{N},\varepsilon>0}.
\]
\item Аналогично, слабую топологию можно задать системой окрестностей 
\[
\left\{ \left(\mathrm{d}_{n},T,\varepsilon\right)\right\} _{T\in\mathcal{A},n\in\mathbb{N},\varepsilon>0},
\]
где 
\[
\mathrm{d}_{n}\left(T,S\right)=\sum_{i\leqslant n}\frac{1}{2^{i}}\left(\mu\left(TA_{i}\bigtriangleup SA_{i}\right)+\mu\left(T^{-1}A_{i}\bigtriangleup S^{-1}A_{i}\right)\right).
\]
\item Множество $\mathcal{A}$ --- неполно относительно метрики $\mathrm{a}$,
но является полным сепарабельным пространством относительно метрики
$\mathrm{d}$.
\item Для всех $T,S\in\mathcal{A}$, имеет место неравенство
\[
2\geq\mathrm{d}\left(T,S\right)\geqslant\mathrm{a}\left(T,S\right).
\]
\item $\mathcal{A}$ является топологической группой (то есть операции взятия
обратного и умножения непрерывны).
\end{itemize}
В силу третьего свойства, метрика $\mathrm{d}$ предпочтительней для
пространства $\mathcal{A}$, она и называется \emph{метрикой слабой
топологии}.

\subsection*{Метрики в $\mathcal{A}_{\mathcal{G}}$}

Прежде всего докажем непрерывность $\mathcal{G}$-действий. 
\begin{claim}
Действие группы $\mathcal{G}$ непрерывно относительно слабой топологии.
\end{claim}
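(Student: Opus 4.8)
The plan is to verify continuity of the map $g\mapsto T^{g}$ from $\mathcal{G}$ into $\mathcal{A}$ equipped with the weak topology. Since the weak topology on $\mathcal{A}$ is metrized by $\mathrm{a}$ just as well as by $\mathrm{d}$, it will suffice to establish continuity in the metric $\mathrm{a}$ alone: continuity in $\mathrm{d}$, and hence weak continuity in the usual sense, then follows automatically, because both metrics induce the very same topology on $\mathcal{A}$ and continuity is a topological notion. Thus I would fix a point $g_{0}\in\mathcal{G}$ and show that $\mathrm{a}(T^{g},T^{g_{0}})\to0$ as $g\to g_{0}$.

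The key tool is that $\mathrm{a}$ is the uniform limit of the increasing sequence $\mathrm{a}_{n}$, with the explicit bound $\mathrm{a}\overset{1/2^{n-1}}{\sim}\mathrm{a}_{n}$. Given $\varepsilon>0$, I would first choose $n$ so large that $1/2^{n-1}<\varepsilon/2$, whence $\mathrm{a}(T^{g},T^{g_{0}})<\mathrm{a}_{n}(T^{g},T^{g_{0}})+\varepsilon/2$ for every $g$. By the very definition of an action, each function $g\mapsto\mu(T^{g}A_{i}\cap A_{j})$ is continuous, so the finite sum
\[
\mathrm{a}_{n}(T^{g},T^{g_{0}})=\sum_{i\leqslant n,\,j\leqslant n}\frac{1}{2^{i+j}}\bigl|\mu(T^{g}A_{i}\cap A_{j})-\mu(T^{g_{0}}A_{i}\cap A_{j})\bigr|
\]
is a continuous function of $g$ that vanishes at $g=g_{0}$. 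Hence there is a neighborhood of $g_{0}$ on which $\mathrm{a}_{n}(T^{g},T^{g_{0}})<\varepsilon/2$, and on that neighborhood $\mathrm{a}(T^{g},T^{g_{0}})<\varepsilon$, which is exactly $\mathrm{a}$-continuity at $g_{0}$.

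There is no serious obstacle here; the only point demanding a little care is the passage from the coordinatewise continuity furnished by the definition to continuity of the whole series, and this is precisely what the uniform tail estimate $\mathrm{a}\overset{1/2^{n-1}}{\sim}\mathrm{a}_{n}$ handles (each summand is bounded by $2^{-(i+j)}$, so the remainder after the first $n$ indices is small uniformly in $g$). If one prefers a localized argument, the verification can instead be reduced to the identity: writing $T^{g}=T^{g_{0}}T^{g_{0}^{-1}g}$ and using that $\mathcal{A}$ is a topological group, it is enough to check continuity at $e$, i.e. that $T^{h}\to\mathrm{Id}$ in $\mathrm{a}$ as $h\to e$, which again comes down to the same finite-sum estimate.
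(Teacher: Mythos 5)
Your proof is correct and follows essentially the same route as the paper's: both arguments reduce the question to the finite pre-metric $\mathrm{a}_{n}$ (you via the explicit uniform tail bound $\mathrm{a}\overset{1/2^{n-1}}{\sim}\mathrm{a}_{n}$, the paper via the already-established base of neighborhoods $\left(\mathrm{a}_{n},T^{g},\varepsilon\right)$) and then invoke the definitional continuity of each coordinate $g\mapsto\mu\left(T^{g}A_{i}\cap A_{j}\right)$ on the finitely many pairs $i,j\leq n$. The only cosmetic difference is that the paper intersects the finitely many neighborhoods $\mathcal{O}_{i,j}$ term by term, while you observe that the finite sum is itself a continuous function of $g$ vanishing at $g_{0}$; these are the same argument.
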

\begin{proof}
Зафиксируем $\varepsilon>0$, действие $T$, произвольное $g\in\mathcal{G}$
и окрестность $\left(\mathrm{a}_{n},T^{g},\varepsilon\right)$. Для
каждой пары чисел $i,j\leq n$ из непрерывности отображения $\mu\left(T^{g}A_{i}\cap A_{j}\right)$
следует существование такой открытой окрестности $\mathcal{O}_{i,j}\subset\mathcal{G}$
элемента $g$, что $\mu\left(T^{g}A_{i}\cap A_{j}\right)\overset{\varepsilon}{\sim}\mu\left(T^{h}A_{i}\cap A_{j}\right)$
для всех $h\in\mathcal{O}_{i,j}$. Тогда, при $h\in\cap_{i,j\leq n}\mathcal{O}_{ij}$,
имеем $T^{h}\in\left(\mathrm{a}_{n},T^{g},\varepsilon\right)$.
\end{proof}
Метрику в $\mathcal{A}_{\mathcal{G}}$ определим функцией 
\[
\mathrm{d}_{\mathcal{G}}\left(T,S\right)=\sum_{i}\frac{1}{2^{i}}\sup_{g\in K_{i}}\mathrm{d}\left(T^{g},S^{g}\right).
\]

\begin{thm}
Множество действий группы $\mathcal{G}$ является полным пространством
относительно метрики $\mathrm{d}_{\mathcal{G}}$.
\end{thm}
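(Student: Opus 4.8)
План состоит в том, чтобы проверить полноту напрямую: взять фундаментальную (в метрике $\mathrm{d}_{\mathcal{G}}$) последовательность действий $\left\{ T_{n}\right\}$, построить поточечный предел $T^{g}$ и убедиться, что $T=\left\{ T^{g}\right\}$ --- снова действие, к которому $T_{n}$ сходятся.

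Сначала я бы заметил, что из определения метрики сразу следует оценка $\sup_{g\in K_{i}}\mathrm{d}\left(T_{n}^{g},T_{m}^{g}\right)\leq2^{i}\,\mathrm{d}_{\mathcal{G}}\left(T_{n},T_{m}\right)$. Поэтому при каждом фиксированном $i$ семейство $\left\{ T_{n}^{g}\right\}$ фундаментально в полном пространстве $\left(\mathcal{A},\mathrm{d}\right)$ равномерно по $g\in K_{i}$, а значит, имеет предел, и сходимость $T_{n}^{g}\to T^{g}$ равномерна на каждом $K_{i}$. Так как $\bigcup_{i}K_{i}$ содержит порождающее множество, любой $g\in\mathcal{G}$ записывается словом $g=e_{1}^{\pm1}\cdots e_{k}^{\pm1}$ с $e_{j}\in\bigcup_{i}K_{i}$; поскольку $T_{n}$ --- гомоморфизм, $T_{n}^{g}=\left(T_{n}^{e_{1}}\right)^{\pm1}\cdots\left(T_{n}^{e_{k}}\right)^{\pm1}$, и поскольку $\mathcal{A}$ --- топологическая группа, предел $T^{g}:=\lim_{n}T_{n}^{g}$ существует для всех $g\in\mathcal{G}$ и лежит в $\mathcal{A}$. Свойство $T^{gh}=\lim_{n}T_{n}^{g}T_{n}^{h}=T^{g}T^{h}$ получается предельным переходом. (Заодно $\mathrm{d}_{\mathcal{G}}$ --- действительно метрика: равенство нулю заставляет $T^{g}=S^{g}$ на порождающем множестве, а значит, и всюду.)

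Основным препятствием я считаю непрерывность отображения $g\mapsto\mu\left(T^{g}A\cap B\right)$. Сначала установлю, что равномерная на $K_{i}$ сходимость в метрике $\mathrm{d}$ влечёт равномерную на $K_{i}$ сходимость $\mu\left(T_{n}^{g}A\cap B\right)\to\mu\left(T^{g}A\cap B\right)$. Для заданного $\varepsilon>0$ выберу множества $A_{p},A_{q}$ из набора $\left\{ A_{i}\right\}$ с $\mu\left(A\bigtriangleup A_{p}\right)<\varepsilon$ и $\mu\left(B\bigtriangleup A_{q}\right)<\varepsilon$. Так как все преобразования сохраняют меру, величина $\left|\mu\left(RA\cap B\right)-\mu\left(RA_{p}\cap A_{q}\right)\right|<2\varepsilon$ равномерно по $R\in\mathcal{A}$, тогда как $\left|\mu\left(RA_{p}\cap A_{q}\right)-\mu\left(R'A_{p}\cap A_{q}\right)\right|\leq2^{p+q}\mathrm{d}\left(R,R'\right)$. Отсюда $\sup_{g\in K_{i}}\left|\mu\left(T_{n}^{g}A\cap B\right)-\mu\left(T^{g}A\cap B\right)\right|\leq2^{p+q}\sup_{g\in K_{i}}\mathrm{d}\left(T_{n}^{g},T^{g}\right)+4\varepsilon$, и правая часть стремится к $4\varepsilon$; ввиду произвольности $\varepsilon$ получается искомая равномерная сходимость. Как равномерный предел непрерывных функций, $g\mapsto\mu\left(T^{g}A\cap B\right)$ непрерывно на $K_{i}$, а значит, и в каждой точке $\operatorname{int}K_{i}$.

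Осталось перенести непрерывность на всю группу и завершить доказательство. Выбрав точку $g_{0}\in\operatorname{int}K_{i}\neq\varnothing$, получаю, что гомоморфизм $g\mapsto T^{g}$ в топологическую группу $\mathcal{A}$ непрерывен в $g_{0}$; стандартный приём для гомоморфизмов --- запись $T^{g}=T^{gg_{1}^{-1}g_{0}}\left(T^{g_{0}}\right)^{-1}T^{g_{1}}$ и замечание, что $gg_{1}^{-1}g_{0}\to g_{0}$ при $g\to g_{1}$, --- переносит непрерывность из точки $g_{0}$ в произвольную точку $g_{1}$. Тем самым $T$ --- действие. Наконец, переходя в оценке $\sup_{g\in K_{i}}\mathrm{d}\left(T_{n}^{g},T_{m}^{g}\right)\leq2^{i}\,\mathrm{d}_{\mathcal{G}}\left(T_{n},T_{m}\right)$ к пределу по $m$, получаю $\sup_{g\in K_{i}}\mathrm{d}\left(T_{n}^{g},T^{g}\right)\to0$ при каждом $i$; разбивая сумму в $\mathrm{d}_{\mathcal{G}}\left(T_{n},T\right)$ на конечную \emph{голову} (где используется эта сходимость) и \emph{хвост} (где хватает оценки $\mathrm{d}\leq2$), заключаю, что $\mathrm{d}_{\mathcal{G}}\left(T_{n},T\right)\to0$. Самым тонким местом остаётся именно перенос непрерывности с компактов $K_{i}$ на всю группу; прочие шаги рутинны.
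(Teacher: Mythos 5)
Ваше доказательство верно и по существу повторяет путь статьи: та же редукция к полноте $\left(\mathcal{A},\mathrm{d}\right)$ через оценку $\mathrm{d}\left(T_{n}^{g},T_{m}^{g}\right)\leq2^{i}\,\mathrm{d}_{\mathcal{G}}\left(T_{n},T_{m}\right)$ на $K_{i}$, то же продолжение предела на всю группу словами в образующих с опорой на непрерывность операций в $\mathcal{A}$, та же проверка группового свойства предельным переходом и та же финальная сходимость $\mathrm{d}_{\mathcal{G}}\left(T_{n},T\right)\to0$ через разбиение суммы на голову и хвост. Единственное заметное отличие --- шаг о непрерывности: в статье она проверяется в единице (в предположении, без ограничения общности, что $e$ лежит во внутренности $K_{1}$, и с неявной ссылкой на то, что для гомоморфизма этого достаточно), а у вас --- во внутренней точке некоторого $K_{i}$ с явным переносом в произвольную точку стандартным приёмом сдвига для гомоморфизмов; это то же самое рассуждение, выполненное даже несколько аккуратнее, поскольку вы явно проводите аппроксимацию произвольных $A,B$ множествами из счётного набора.
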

\begin{proof}
Проверим, что $\mathrm{d}_{\mathcal{G}}$ действительно метрика. Действительно,
функция определена везде, так как супремумы в правой части берутся
от равномерно ограниченных числовых множеств. Если расстояние $\mathrm{d}_{\mathcal{G}}\left(T,S\right)$
равно нулю, то $T^{g}=S^{g}$ для всех $i$ и $g\in K_{i}$, следовательно
для всех $g\in\mathcal{G}$. Далее, выполняется и неравенство треугольника,
так как для любого $K\in\left\{ K_{i}\right\} $, имеем
\[
\sup_{g\in K}\mathrm{d}\left(S^{g},T^{g}\right)\leq\sup_{g\in K}\left(\mathrm{d}\left(T^{g},U^{g}\right)+\mathrm{d}\left(U^{g},S^{g}\right)\right)\leq\sup_{g\in K}\mathrm{d}\left(T^{g},U^{g}\right)+\sup_{g\in K}\mathrm{d}\left(U^{g},S^{g}\right).
\]

Пусть $\left\{ T_{i}\right\} $ --- фундаментальная последовательность
действий группы $\mathcal{G}$. Зафиксируем любое $g\in\cup_{l}K_{l}$
и пусть $k$ --- минимальный номер, для которого $g\in K_{k}$. Тогда
в $\mathcal{A}$ фундаментальна последовательность $\left\{ T_{i}^{g}\right\} $,
так как
\[
\mathrm{d}\left(T_{i}^{g},T_{j}^{g}\right)\leq2^{k}\sum_{l\geq k}\frac{1}{2^{l}}\mathrm{d}\left(T_{i}^{g},T_{j}^{g}\right)\leq2^{k}\sum_{i}\frac{1}{2^{i}}\sup_{g\in K_{i}}\mathrm{d}\left(T_{i}^{g},T_{j}^{g}\right)=2^{k}\mathrm{d}\left(T_{i},T_{j}\right).
\]

Для произвольного $g\in\mathcal{G}$, $g=g_{1}...g_{n}$, где $\left\{ g_{i}\right\} \subset\cup_{l}K_{l}$,
определим преобразование $T^{g}$ равенством 
\[
T^{g}=T^{g_{1}}T^{g_{2}}...T^{g_{n}}.
\]
 Определение $T^{g}$ корректно, так как из непрерывности операции
умножения следует, что $T_{i}^{g}\to T^{g}$. 

Покажем, что $\left\{ T^{g}\right\} _{g\in\mathcal{G}}$ --- действие
группы $\mathcal{G}$. 

Проверим групповое свойство.

Для всех $h,g\in\mathcal{G}$ имеем $T_{i}^{g}\to T^{g}$,$T_{i}^{h}\to T^{h}$,
следовательно, исходя из группового свойства в $\mathcal{A}$, имеем
\[
T_{i}^{g}T_{i}^{h}\to T^{g}T^{h}.
\]
В то же время 
\[
T_{i}^{g}T_{i}^{h}=T_{i}^{gh}\to T^{gh},
\]
то есть $T^{gh}=T^{g}T^{h}$. 

Осталось проверить непрерывность отображения $g\mapsto T^{g}$ в единице
$e$ группы $\mathcal{G}$. Не теряя общности, можно считать, что
$e$ содержится в некоторой открытой окрестности $\mathcal{O}_{1}\subset K_{1}$.

Пусть $A,B\in\Sigma$ и $\varepsilon>0$. Рассмотрим такой номер $l$,
что 
\[
\mu\left(T_{i}^{h}A\cap B\right)\overset{\varepsilon}{\sim}\mu\left(T_{l}^{h}A\cap B\right)
\]
 для всех $h\in K_{1}$ и $i>l$. Такой номер существует, так как
последовательность $\left\{ T_{i}\right\} $ фундаментальна. 

Поскольку $T_{i}^{h}\to T^{h}$ при $i\to\infty$, отсюда следует,
что 
\[
\mu\left(T^{h}A\cap B\right)\overset{2\varepsilon}{\sim}\mu\left(T_{l}^{h}A\cap B\right)
\]
 для всех $h\in K_{1}$.

Тогда, для всех $h\in\mathcal{O}_{1}$, удовлетворяющих открытому
условию 
\[
\mu\left(T_{l}^{h}A\cap B\right)\overset{\varepsilon}{\sim}\mu\left(A\cap B\right),
\]
 имеем 
\[
\left|\mu\left(A\cap B\right)-\mu\left(T^{h}A\cap B\right)\right|\leq\left|\mu\left(A\cap B\right)-\mu\left(T_{l}^{e}A\cap B\right)\right|+
\]
\[
+\left|\mu\left(T_{l}^{e}A\cap B\right)-\mu\left(T_{l}^{h}A\cap B\right)\right|+\left|\mu\left(T_{l}^{h}A\cap B\right)-\mu\left(T^{h}A\cap B\right)\right|<3\varepsilon.
\]

Таким образом, действие $T$ непрерывно. 

Очевидно, что $T$ является пределом последовательности $\left\{ T_{i}\right\} $
так как 
\[
\mathrm{d}\left(T_{i}^{h},T^{h}\right)\to0
\]
равномерно по всем $h\in K_{1}$, $h\in K_{2}$ и так далее.
\end{proof}
Метрика $\mathrm{d_{\mathcal{G}}}$ является равномерным пределом
возрастающей последовательности предметрик 
\[
\mathrm{d}_{\mathcal{G}}^{\left(n\right)}\left(T,S\right)=\sum_{i\leqslant n}\frac{1}{2^{i}}\sup_{g\in K_{i}}\mathrm{d}\left(T^{g},S^{g}\right),
\]

Следовательно базу $\mathrm{d}_{\mathcal{G}}$\-топологии образуют
множества 
\[
\left\{ \left(\mathrm{d}_{\mathcal{G}}^{\left(n\right)},T,\varepsilon\right)\right\} _{T\in\mathcal{A}_{\mathcal{G}},n\in\mathbb{N},\varepsilon>0}.
\]

\begin{thm}
Множество действий группы $\mathcal{G}$ является полным сепарабельным
пространством относительно метрики $\mathrm{d}_{\mathcal{G}}$.
\end{thm}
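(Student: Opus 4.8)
The plan is to separate the two assertions. Completeness with respect to $\mathrm{d}_{\mathcal{G}}$ was already established in the preceding theorem, so the entire task reduces to proving that $\mathcal{A}_{\mathcal{G}}$ is separable in the $\mathrm{d}_{\mathcal{G}}$-metric. I would obtain separability by realizing $\mathcal{A}_{\mathcal{G}}$ as a subspace of a countable product of separable spaces, exploiting that separability (equivalently, for metric spaces, second countability) is inherited by subspaces and by countable products.

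First I would record the structural input that makes this work: a Hausdorff topological group with a countable neighborhood base is metrizable (Birkhoff--Kakutani), so $\mathcal{G}$ carries a compatible metric and each $K_i$ is a compact metric space. Next, recall from the list of properties of $\mathcal{A}$ that $(\mathcal{A},\mathrm{d})$ is separable. The key intermediate fact I would then invoke is that, for a compact metric space $K$ and a separable metric space $(Y,\mathrm{d})$, the space $C(K,Y)$ of continuous maps with the uniform metric $\rho(f,h)=\sup_{g\in K}\mathrm{d}(f(g),h(g))$ is separable; applying this with $Y=\mathcal{A}$ shows that each $C(K_i,\mathcal{A})$ is separable.

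With these in hand, consider the map $\Phi\colon\mathcal{A}_{\mathcal{G}}\to\prod_i C(K_i,\mathcal{A})$ sending an action $T$ to the tuple of its restrictions $\left(T|_{K_i}\right)_i$, where each $T|_{K_i}$ is continuous by the continuity of the action established earlier. Equipping the product with the metric $D\bigl((f_i),(h_i)\bigr)=\sum_i 2^{-i}\rho_i(f_i,h_i)$, which is finite because $\mathrm{d}\le 2$ forces each $\rho_i\le 2$ and which induces the product topology, one checks directly that $D(\Phi T,\Phi S)=\mathrm{d}_{\mathcal{G}}(T,S)$, so $\Phi$ is an isometric (in particular injective) embedding. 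A countable product of separable metric spaces is separable, hence $\Phi(\mathcal{A}_{\mathcal{G}})$, being a subspace, is separable, and therefore so is $\mathcal{A}_{\mathcal{G}}$.

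The main obstacle is the intermediate fact that $C(K,\mathcal{A})$ is separable, which cannot be proved by naive interpolation, since $\mathcal{A}$ carries no linear or convex structure from which to build approximating ``step maps''. I would handle it by embedding: a separable metric space embeds homeomorphically into the Hilbert cube $Q=[0,1]^{\mathbb{N}}$, the induced map $C(K,\mathcal{A})\to C(K,Q)$ is a topological embedding for the topology of uniform (equivalently, since $K$ is compact, compact-open) convergence, and $C(K,Q)=\prod_n C(K,[0,1])$ is separable because each $C(K,[0,1])$ is separable by Stone--Weierstrass. Thus $C(K,\mathcal{A})$ is homeomorphic to a subspace of a separable space, hence separable; since only separability enters the argument above, working up to homeomorphism rather than isometry at this step causes no difficulty.
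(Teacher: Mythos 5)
Your proof is correct, and it reaches separability by a genuinely different route than the paper. The paper's own proof is a bare-hands net construction: using the uniform convergence of the premetrics $\mathrm{d}_{\mathcal{G}}^{\left(n\right)}$ to $\mathrm{d}_{\mathcal{G}}$, it fixes $n$ and $\varepsilon$, takes a countable dense subset $L$ of the compact set $K=\cup_{i\leq n}K_{i}$ and a countable dense family $\left\{ T_{m}\right\}$ in $\mathcal{A}$, and uses compactness of $K$ together with continuity of each action to cover $\mathcal{A}_{\mathcal{G}}$ by countably many sets $\mathcal{O}(k,l,m,\varepsilon)$ of $\mathrm{d}_{\mathcal{G}}^{\left(n\right)}$-diameter at most $4\varepsilon$; choosing one point in each nonempty set yields a countable net. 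You instead isolate a structural statement --- $\left(\mathcal{A}_{\mathcal{G}},\mathrm{d}_{\mathcal{G}}\right)$ embeds isometrically into the countable product $\prod_{i}C\left(K_{i},\mathcal{A}\right)$ with the weighted uniform metric --- and delegate the analytic content to the classical fact that $C\left(K,Y\right)$ is separable for $K$ compact metric and $Y$ separable metric, proved via the Hilbert cube embedding and Stone--Weierstrass. All of your steps check out: the restrictions $T|_{K_{i}}$ are continuous by the paper's claim on weak continuity of actions, the identity $D\left(\Phi T,\Phi S\right)=\mathrm{d}_{\mathcal{G}}\left(T,S\right)$ is exactly the definition of $\mathrm{d}_{\mathcal{G}}$, separability is hereditary for metric spaces via second countability, and Birkhoff--Kakutani legitimately turns each $K_{i}$ into a compact metric space. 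Comparing the two: your argument is modular, and it makes explicit a hypothesis the paper uses silently (the existence of a countable dense subset of $K$, which likewise rests on metrizability of $\mathcal{G}$); the paper's argument is elementary and self-contained, invoking nothing beyond compactness and triangle inequalities, and its explicit covering construction is the template reused later in the separability proof for $\mathcal{M}_{\mathcal{G},\Gamma}$. In substance the paper's covering argument is a hands-on proof of exactly the case of your key lemma on $C\left(K,\mathcal{A}\right)$ that is needed, so the two proofs share a core but package it quite differently.
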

\begin{proof}
Полнота пространства $\mathcal{A}_{\mathcal{G}}$ установлена в предыдущей
теореме. Покажем его сепарабельность. 

Так как предметрики $\mathrm{d}_{\mathcal{G}}^{\left(n\right)}\left(T,S\right)$
сходятся к $\mathrm{d}_{\mathcal{G}}\left(T,S\right)$ равномерно
при $n\to\infty$, достаточно найти счетную $\left(\mathrm{d}_{\mathcal{G}}^{\left(n\right)},4\varepsilon\right)$\-сеть
при любых фиксированных $n\in\mathbb{N},\varepsilon>0$.

Через $L$ обозначим любое счетное всюду плотное подмножество в $K=\cup_{i\leq n}K_{i}$,
через $\left\{ T_{i}\right\} $ --- счетное всюду плотное множество
в $\mathcal{A}$.

Зафиксируем произвольное действие $S$. Для $f\in L$ и $\varepsilon>0$
рассмотрим множество 
\[
\left\{ g\in\mathcal{G}\,:\,\mathrm{d}\left(S^{g},S^{f}\right)<\varepsilon\right\} .
\]

Эти множества задают открытое покрытие $K$. Так как $K$ --- компакт,
то из него можно выделить конечное подпокрытие с некоторыми центрами
$l=\left(l_{1},...,l_{k}\right)\subset L^{k}$. В этом случае скажем,
что $S\in K\left(k,l,\varepsilon\right)$.

Рассмотрим счетный набор множеств, зависящих от $k,\varepsilon$ и
векторов $l\in L^{k},m\in\mathbb{N}^{k}$: 

\[
\mathcal{O}(k,l,m,\varepsilon)=\left\{ T\in K\left(k,l,\varepsilon\right)\ :\ \mathrm{d}\left(T^{l_{i}},T_{m_{i}}\right)<\varepsilon,\ i=1,...,k\right\} .
\]

Очевидно, что каждое действие лежит как минимум в одном из этих множеств.

Если два действия $T,S$ лежат в одном множестве $\mathcal{O}(k,l,m,\varepsilon)$,
то $\mathrm{d}_{\mathcal{G}}^{\left(n\right)}$-расстояние между ними
не превышает $4\varepsilon$, так как для любого $g\in K$ и некоторого
$l_{i}$, имеем 
\[
\mathrm{d}\left(T^{g},S^{g}\right)\leqslant\mathrm{d}\left(T^{g},T^{l_{i}}\right)+\mathrm{d}\left(T^{l_{i}},T_{m_{i}}\right)+\mathrm{d}\left(T_{m_{i}},S^{l_{i}}\right)+\mathrm{d}\left(S^{l_{i}},S^{g}\right)<4\varepsilon.
\]
Таким образом, взяв по одному действию в каждом непустом множестве
$\mathcal{O}(k,l,m,\varepsilon)$, мы получаем счетную $\left(\mathrm{d}_{\mathcal{G}}^{\left(n\right)},4\varepsilon\right)$-сеть
в $\mathcal{A}_{\mathcal{G}}$.
\end{proof}

\subsection*{Пространство $\mathcal{M}_{\mathcal{G},\Gamma}$}

Метрика поводок\-топологии на множестве $\mathcal{M}_{\mathcal{G},\Gamma}$
задается формулой 
\[
\mathrm{m}_{\mathcal{G},\Gamma}\left(T,S\right)=\mathrm{d}_{\mathcal{G}}\left(T,S\right)+\sup_{g\in\Gamma}\mathrm{a}\left(T^{g},S^{g}\right).
\]

\begin{lem}
$\mathcal{M}_{\mathcal{G},\Gamma}$ --- сепарабельное пространство. 
\end{lem}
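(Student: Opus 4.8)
План состоит в том, чтобы свести $\mathrm{m}_{\mathcal{G},\Gamma}$\-сепарабельность пространства $\mathcal{M}_{\mathcal{G},\Gamma}$ к сепарабельности двух уже известных пространств при помощи изометрического вложения. Ключевое наблюдение таково: добавочное по сравнению с $\mathrm{d}_{\mathcal{G}}$ слагаемое $\sup_{g\in\Gamma}\mathrm{a}(T^g,S^g)$ есть расстояние в метрике равномерной сходимости между двумя непрерывными функциями со значениями в компактном метрическом пространстве. Именно, каждому действию $T$ сопоставим функцию $F_T\colon\mathcal{G}\to M$, где $M=[0,1]^{\mathbb{N}\times\mathbb{N}}$ снабжено метрикой $\rho(x,y)=\sum_{i,j}\frac{1}{2^{i+j}}|x_{ij}-y_{ij}|$, положив $F_T(g)=\left(\mu(T^gA_i\cap A_j)\right)_{i,j}$. Пространство $(M,\rho)$ --- компактное метрическое пространство, отображение $F_T$ непрерывно (каждая координата непрерывна по определению действия, а ряд сходится равномерно), причём $\mathrm{a}(T^g,S^g)=\rho\!\left(F_T(g),F_S(g)\right)$. Свойство $\Gamma$\-перемешивания означает в точности, что $F_T(g)\to c$ при $g\in\Gamma,\ |g|\to\infty$, где $c=\left(\mu(A_i)\mu(A_j)\right)_{i,j}$ --- \emph{один и тот же} для всех действий предел (отвечающий оператору $\Theta$).

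Далее я бы перешёл к замыканию. Функция $g\mapsto\rho(F_T(g),F_S(g))$ непрерывна, поэтому $\sup_{g\in\Gamma}\mathrm{a}(T^g,S^g)=\sup_{g\in\overline{\Gamma}}\mathrm{a}(T^g,S^g)$, и можно работать с замыканием $\overline{\Gamma}$. Множество $\overline{\Gamma}$ замкнуто, а потому локально компактно и $\sigma$\-компактно; сама группа $\mathcal{G}$ метризуема (по теореме Биркгофа--Какутани, ибо она хаусдорфова и имеет счётную базу окрестностей) и $\sigma$\-компактна, а значит сепарабельна и обладает счётной базой. Поэтому одноточечная компактификация $\overline{\Gamma}^{+}=\overline{\Gamma}\cup\{\infty\}$ компактна и метризуема. Из непрерывности $F_T$ и сходимости $F_T(g)\to c$ вдоль $\Gamma$ следует, что $F_T(g)\to c$ и вдоль $\overline{\Gamma}$ при $|g|\to\infty$, так что $F_T|_{\overline{\Gamma}}$ продолжается до непрерывной функции $\widetilde{F}_T\in C(\overline{\Gamma}^{+},M)$ со значением $\widetilde{F}_T(\infty)=c$.

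Наконец, сопоставление $T\mapsto\bigl(T,\widetilde{F}_T\bigr)$ задаёт отображение пространства $\mathcal{M}_{\mathcal{G},\Gamma}$ в произведение $\bigl(\mathcal{A}_{\mathcal{G}},\mathrm{d}_{\mathcal{G}}\bigr)\times\bigl(C(\overline{\Gamma}^{+},M),\|\cdot\|_{\infty}\bigr)$. Снабдив произведение суммой метрик сомножителей, получаем, что это отображение --- изометрия на свой образ, поскольку указанная сумма в точности равна $\mathrm{m}_{\mathcal{G},\Gamma}(T,S)$. Первый сомножитель сепарабелен по доказанной выше теореме, второй сепарабелен как пространство непрерывных отображений компакта в компактное метрическое пространство с метрикой равномерной сходимости. Произведение сепарабельных метрических пространств сепарабельно, а подпространство сепарабельного метрического пространства сепарабельно; значит, сепарабельно и $\mathcal{M}_{\mathcal{G},\Gamma}$.

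Основной трудностью я считаю аккуратную проверку непрерывного продолжения $F_T$ на компактификацию: нужно убедиться, что перемешивание вдоль $\Gamma$ влечёт сходимость к $c$ вдоль всего замыкания $\overline{\Gamma}$ на бесконечности (а не только вдоль $\Gamma$), и что $\mathcal{G}$ действительно обладает счётной базой. Альтернативный путь --- прямое построение счётной $\varepsilon$\-сети в духе доказательства сепарабельности $\mathcal{A}_{\mathcal{G}}$: там фигурировали бы множества $K(k,l,\varepsilon)$ и $\mathcal{O}(k,l,m,\varepsilon)$, но теперь с дополнительным контролем слагаемого $\sup_{g\in\Gamma}\mathrm{a}$, хвост которого при $|g|\to\infty$ гасится общим для всех действий пределом $c$.
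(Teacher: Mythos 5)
Ваше доказательство корректно, но идёт существенно иным путём, нежели в статье. В статье счётная $(\mathrm{m}_{\mathcal{G},\Gamma},2\varepsilon)$-сеть строится напрямую: фиксируется исчерпание $\{G_i\}$ группы открытыми множествами с компактными замыканиями, вводятся множества $\mathcal{Q}_i$ тех действий, у которых $\mathrm{a}(T^g,\Theta)<\varepsilon/2$ при $g\in\Gamma\setminus G_i$, и вспомогательная метрика слабой топологии $\mathrm{d}_{\mathcal{G}}^{\prime}$, порождённая компактами $\overline{G_i}\cup K_j$, контролирующая одновременно $\mathrm{d}_{\mathcal{G}}$ и $\sup_{g\in G_i}\mathrm{a}(T^g,S^g)$; взяв по точке в каждом непустом пересечении $\mathcal{Q}_i\cap\mathcal{U}_{i,p}\cap\mathcal{M}_{\mathcal{G},\Gamma}$, получают сеть. Вы же строите изометрическое вложение $T\mapsto(T,\widetilde{F}_T)$ в произведение $(\mathcal{A}_{\mathcal{G}},\mathrm{d}_{\mathcal{G}})\times C(\overline{\Gamma}^{+},M)$ с суммой метрик и ссылаетесь на сепарабельность сомножителей и наследственность сепарабельности в метрических пространствах. Содержательно оба рассуждения держатся на одном и том же наблюдении: все $\Gamma$-перемешивающие действия имеют общий предел на бесконечности ($\Theta$, то есть точку $c$), что и обезвреживает супремум по некомпактному множеству $\Gamma$; в статье это зашито в определение $\mathcal{Q}_i$, у вас --- в непрерывное продолжение в точку $\infty$ одноточечной компактификации. Ваш путь модульнее и короче за счёт стандартных фактов (метризуемость $\mathcal{G}$ по теореме Биркгофа--Какутани, метризуемость одноточечной компактификации локально компактного пространства со счётной базой, сепарабельность $C(X,Y)$ для компактного метризуемого $X$ и польского $Y$); подход статьи элементарнее и самодостаточнее --- он переиспользует конструкцию сети из доказательства сепарабельности $\mathcal{A}_{\mathcal{G}}$ и не апеллирует к метризуемости группы. Отмеченные вами трудности реальны, но закрываются: $\sigma$-компактность $\mathcal{G}$ (а с ней сепарабельность и счётная база) вытекает из принятого в статье предположения, что счётный набор компактов $\{K_i\}$ покрывает порождающее множество, так как группа, порождённая $\sigma$-компактным множеством, сама $\sigma$-компактна; продолжение же на $\overline{\Gamma}$ одношагово: если $h\in\overline{\Gamma}\setminus K$, где $K$ --- компакт из определения перемешивания, то направленность элементов $\Gamma$, сходящаяся к $h$, в итоге лежит в открытом множестве $\mathcal{G}\setminus K$, и по непрерывности $F_T$ получаем $\rho(F_T(h),c)\leq\varepsilon$. Наконец, упомянутый вами в конце альтернативный путь --- прямое построение сети, хвост которой гасится общим пределом $c$, --- это в точности способ, реализованный в статье.
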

\begin{proof}
Достаточно показать, что для любого $k$ и произвольного $\varepsilon>0$
в $\mathcal{M}_{\mathcal{G},\Gamma}$ существует счетная $\left(\mathrm{m}_{\mathcal{G},\Gamma},\varepsilon\right)$\-сеть. 

Пусть $\left\{ G_{i}\right\} $ --- возрастающая последовательность
открытых множеств с компактным замыканием, покрывающая $\mathcal{G}$. 

Положим 
\[
\mathcal{Q}_{i}=\left\{ T\in\mathcal{M}_{\mathcal{G},\Gamma}\mid g\in\left(\Gamma\setminus G_{i}\right)\Rightarrow\mathrm{a}\left(T^{g},\Theta\right)<\frac{\varepsilon}{2}\right\} ,
\]

и покажем, что любое $\Gamma$-перемешивающее преобразование $T$
лежит в одном из этих множеств. По определению $\Gamma$-перемешивания,
существует такой компакт $K$, что при $g\in\left(\Gamma\setminus K\right)$,
имеем $\mathrm{a}\left(T^{g},\Theta\right)<\frac{\varepsilon}{2}$.
Так как $\left\{ G_{i}\right\} $ --- покрытие $K$, то существует
такой номер $i$, для которого $K\subset G_{i}$. Тогда 
\[
g\in\left(\Gamma\setminus G_{i}\right)\Rightarrow g\in\left(\Gamma\setminus K\right)\Rightarrow\mathrm{a}\left(T^{g},\Theta\right)<\frac{\varepsilon}{2},
\]
 то есть $T\in\mathcal{Q}_{i}$. 

Рассмотрим метрику слабой топологии $\mathrm{d}_{\mathcal{G}}^{\prime}$,
порожденную множествами $\left\{ \overline{G_{i}}\cup K_{j}\right\} _{j}$
(здесь $\overline{G_{i}}$ --- замыкание множества $G_{i}$). Поскольку
это --- метрика слабой топологии в $\mathcal{A}_{\mathcal{G}}$,
существует счетное покрытие $\mathcal{A}_{\mathcal{G}}$ множествами
$\left\{ \mathcal{U}_{i,p}\right\} _{p\in\mathbb{N}}$, диаметры которых
в метрике $\mathrm{d}_{\mathcal{G}}^{\prime}$ меньше $\varepsilon$.
Воспользовавшись неравенствами $\mathrm{d}_{\mathcal{G}}\leq\mathrm{d}_{\mathcal{G}}^{\prime}$
и $\mathrm{a}\leq\mathrm{d}$, получаем, что при $T,S\in\mathcal{U}_{i,p}$,
имеем 
\[
\mathrm{d}_{\mathcal{G}}\left(T,S\right)<\varepsilon.
\]
Кроме того, 
\[
\sup_{g\in G_{i}}\mathrm{a}\left(T^{g},S^{g}\right)\leq\sum_{i}\frac{1}{2^{i}}\sup_{g\in G_{i}}\mathrm{d}\left(T^{g},S^{g}\right)\leq\mathrm{d}_{\mathcal{G}}^{\prime}\left(T,S\right)<\varepsilon.
\]

Далее, в каждом непустом множестве вида 
\[
\mathcal{Q}_{i}\cap\mathcal{U}_{i,p}\cap\mathcal{M}_{\mathcal{G},\Gamma}
\]
возьмем один элемент. Тогда, каждое $\Gamma$\-перемешивающее действие
лежит в одном из этих множеств. 

Для двух элементов $T,S\in\mathcal{Q}_{i}\cap\mathcal{U}_{i,p}\cap\mathcal{M}_{\mathcal{G},\Gamma}$,
имеем 
\[
\sup_{g\in\Gamma}\mathrm{a}\left(T^{g},S^{g}\right)+\mathrm{d}_{\mathcal{G}}\left(T,S\right)<\max\left\{ \sup_{g\in G_{i}}\mathrm{a}\left(T^{g},S^{g}\right),\sup_{g\notin G_{i}}\mathrm{a}\left(T^{g},S^{g}\right)\right\} +\varepsilon\leqslant2\varepsilon.
\]
 Таким образом, в пространстве $\mathcal{M}_{\mathcal{G},\Gamma}$
имеется счетная $\left(\mathrm{m},2\varepsilon\right)$-сеть. Следовательно,
оно сепарабельно.
\end{proof}
\begin{thm}
\label{GL}$\mathcal{M}_{\mathcal{G},\Gamma}$ --- полное сепарабельное
пространство.
\end{thm}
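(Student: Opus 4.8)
План доказательства состоит в следующем. Сепарабельность уже установлена в предыдущей лемме, поэтому я сосредоточусь на полноте. Начну с фундаментальной в метрике $\mathrm{m}_{\mathcal{G},\Gamma}$ последовательности действий $\left\{ T_{n}\right\} $. Поскольку из определения метрики очевидно неравенство $\mathrm{d}_{\mathcal{G}}\leq\mathrm{m}_{\mathcal{G},\Gamma}$, эта последовательность будет фундаментальна и в метрике $\mathrm{d}_{\mathcal{G}}$. По ранее доказанной теореме о полноте пространства $\mathcal{A}_{\mathcal{G}}$ найдётся предельное действие $T\in\mathcal{A}_{\mathcal{G}}$, для которого $\mathrm{d}_{\mathcal{G}}\left(T_{n},T\right)\to0$. Останется проверить два факта: что $T$ лежит в $\mathcal{M}_{\mathcal{G},\Gamma}$ (то есть $\Gamma$-перемешивает) и что сходимость имеет место в полной метрике $\mathrm{m}_{\mathcal{G},\Gamma}$, а не только в $\mathrm{d}_{\mathcal{G}}$.

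Далее я установлю равномерную по $\Gamma$ сходимость $\mathrm{a}$-части. Ключевым будет то, что из построения $T$ в теореме о полноте $\mathcal{A}_{\mathcal{G}}$ для каждого фиксированного $g\in\mathcal{G}$ следует $\mathrm{d}\left(T_{n}^{g},T^{g}\right)\to0$, а значит, в силу неравенства $\mathrm{a}\leq\mathrm{d}$, и $\mathrm{a}\left(T_{n}^{g},T^{g}\right)\to0$. Зафиксировав $\varepsilon>0$, я выберу такой номер $N$, что $\sup_{g\in\Gamma}\mathrm{a}\left(T_{n}^{g},T_{m}^{g}\right)<\varepsilon$ при $n,m\geq N$. Затем при каждом фиксированном $g\in\Gamma$ перейду к пределу по $m\to\infty$, получив $\mathrm{a}\left(T_{n}^{g},T^{g}\right)\leq\varepsilon$, и возьму супремум по $g\in\Gamma$. Это даст $\sup_{g\in\Gamma}\mathrm{a}\left(T_{n}^{g},T^{g}\right)\to0$, а вместе с $\mathrm{d}_{\mathcal{G}}$-сходимостью — и требуемое $\mathrm{m}_{\mathcal{G},\Gamma}\left(T_{n},T\right)\to0$.

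Наконец, я проверю $\Gamma$-перемешивание предельного действия $T$. Зафиксировав $\varepsilon>0$, по предыдущему шагу выберу $n$ так, чтобы $\sup_{g\in\Gamma}\mathrm{a}\left(T^{g},T_{n}^{g}\right)<\varepsilon$, и воспользуюсь тем, что $T_{n}$ является $\Gamma$-перемешивающим: найдётся такой компакт $K$, что $\mathrm{a}\left(T_{n}^{g},\Theta\right)<\varepsilon$ при $g\in\Gamma\setminus K$. Тогда по неравенству треугольника $\mathrm{a}\left(T^{g},\Theta\right)\leq\mathrm{a}\left(T^{g},T_{n}^{g}\right)+\mathrm{a}\left(T_{n}^{g},\Theta\right)<2\varepsilon$ при $g\in\Gamma\setminus K$, откуда $T\in\mathcal{M}_{\mathcal{G},\Gamma}$, и пространство полно.

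Главным препятствием я ожидаю именно последний шаг — наследование предельным действием свойства $\Gamma$-перемешивания. Одной $\mathrm{d}_{\mathcal{G}}$-сходимости здесь недостаточно, так как она контролирует поведение лишь на компактах $K_{i}$, тогда как требуется равномерная по всему (вообще говоря, некомпактному) множеству $\Gamma$ оценка, которую и обеспечивает $\mathrm{a}$-часть поводок-метрики. Технически это стандартный приём перестановки пределов: при фиксированном $g$ имеется поточечная $\mathrm{a}$-сходимость, а фундаментальность в $\mathrm{m}_{\mathcal{G},\Gamma}$ даёт равномерную по $\Gamma$ оценку разностей $\mathrm{a}\left(T_{n}^{g},T_{m}^{g}\right)$; их совмещение и даёт равномерную по $\Gamma$ сходимость, без которой перемешивание не переносилось бы на предел.
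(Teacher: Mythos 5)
Ваше доказательство верно и по существу совпадает с доказательством в статье: та же редукция к $\mathrm{d}_{\mathcal{G}}$-полноте пространства $\mathcal{A}_{\mathcal{G}}$, тот же приём совмещения равномерной по $\Gamma$ фундаментальности $\mathrm{a}$-части с поточечной слабой сходимостью $T_{n}^{g}\to T^{g}$ (в статье это оформлено через выбор номера $n\left(g\right)$ для каждого $g$, у вас --- через предельный переход по $m$, что одно и то же), и тот же перенос $\Gamma$-перемешивания на предел по неравенству треугольника через $\Theta$.
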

\begin{proof}
В силу предыдущей леммы достаточно доказать полноту. Любая $\mathrm{m}$\-фундаментальная
последовательность $\left\{ T_{i}\right\} $ является также $\mathrm{d}$\-фундаментальной
в $\mathcal{A}_{\mathcal{G}}$. Пусть $\mathrm{d}$\-предел этой
последовательности --- некоторое $\mathcal{G}$\-действие $T$. 

Возьмем произвольное $\varepsilon>0$.

Поскольку последовательность $\left\{ T_{i}\right\} $ фундаментальна,
найдется такое $i$, что при $b,c>i$ и любом $g\in\Gamma$, имеем
$\mathrm{a}\left(T_{b}^{g},T_{c}^{g}\right)<\varepsilon.$ Далее,
$T_{b}^{g}$ слабо сходится к $T^{g}$, значит, существует такое $n\left(g\right)$,
что $\mathrm{a}\left(T_{n\left(g\right)}^{g},T^{g}\right)<\varepsilon$. 

Тогда при $b>i$, 
\[
\sup_{g\in\Gamma}\mathrm{a}\left(T^{g},T_{b}^{g}\right)\leqslant\sup_{g\in\Gamma}\left(\mathrm{a}\left(T^{g},T_{n\left(g\right)}^{g}\right)+\mathrm{a}\left(T_{b}^{g},T_{n\left(g\right)}^{g}\right)\right)<2\varepsilon.
\]

Таким образом, $T$ --- $\mathrm{m}$\-предел последовательности
$\left\{ T_{i}\right\} $. 

Далее, покажем, что предельное действие $T$ перемешивает на $\Gamma$.
Возьмем любое $n\in\mathbb{N}$ и такое $j$, что 
\[
\mathrm{m}_{\mathcal{G},\Gamma}\left(T,T_{j}\right)<\varepsilon.
\]
 Так как $T_{j}$ является $\Gamma$\-перемешивающим действием, существует
такое компактное множество $G$, что при $g\in\Gamma\setminus G$
имеем 
\[
\mathrm{a}\left(T_{j}^{g},\Theta\right)<\varepsilon.
\]
 Тогда, для $g\in\Gamma\setminus G$, имеет место оценка 
\[
\mathrm{a}\left(T^{g},\Theta\right)\leqslant\mathrm{a}\left(T^{g},T_{j}^{g}\right)+\mathrm{a}\left(T_{j}^{g},\Theta\right)<2\varepsilon.
\]
В силу произвольности $\varepsilon$ заключаем, что $T$ перемешивает
на $\Gamma$. 
\end{proof}
В заключении дадим достаточное условие для того, чтобы множесво $\mathcal{M}_{\mathcal{G},\Gamma}$
совпадало со множеством всех сильно перемешивающих $\mathcal{G}$\-действий.

Множество $\Gamma$ назовем $H$\-\emph{сетью} в $\mathcal{G}$,
если $\mathcal{G}=H\Gamma$.
\begin{claim}
Пусть $H$ --- компактное подмножество $\mathcal{G}$ и $\Gamma$
--- $H$\-сеть. Тогда любое $\mathcal{G}$\-действие, перемешивающее
на $\Gamma$, перемешивает на всей группе $\mathcal{G}$.
\end{claim}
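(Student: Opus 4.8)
The plan is to deduce strong ($\mathcal{G}$\-)mixing from the assumed $\Gamma$\-mixing by factoring each group element through the compact set $H$. Fix $A,B\in\Sigma$ and $\varepsilon>0$. Since $\mathcal{G}=H\Gamma$, write an arbitrary $g\in\mathcal{G}$ as $g=h\gamma$ with $h\in H$, $\gamma\in\Gamma$. Using $T^{h\gamma}=T^{h}T^{\gamma}$ and the fact that $T^{h^{-1}}$ preserves $\mu$ and commutes with intersection, one obtains the identity
\[
\mu\left(T^{g}A\cap B\right)=\mu\left(T^{h}T^{\gamma}A\cap B\right)=\mu\left(T^{\gamma}A\cap T^{h^{-1}}B\right),
\]
so the quantity to be controlled is $\mu(T^{\gamma}A\cap T^{h^{-1}}B)$, whose target value $\mu(A)\mu(T^{h^{-1}}B)=\mu(A)\mu(B)$ is independent of $h$. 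To convert ``$g\to\infty$ in $\mathcal{G}$'' into ``$\gamma\to\infty$ in $\Gamma$'', I note that for any compact $K'$ the set $HK'$ is compact (continuous image of $H\times K'$), and that $g\notin HK'$ forces $\gamma\notin K'$ in every factorization $g=h\gamma$. Hence it suffices to make the displayed quantity close to $\mu(A)\mu(B)$ for \emph{all} $\gamma\in\Gamma\setminus K'$ and \emph{all} $h\in H$ simultaneously, for a single compact $K'$.

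The difficulty is precisely this uniformity: $\Gamma$\-mixing is a pointwise statement in its second argument, whereas the second argument $T^{h^{-1}}B$ now runs over the whole family indexed by $h\in H$. I resolve it by compactness. First, $h\mapsto T^{h^{-1}}B$ is continuous from $H$ into the measure algebra $(\Sigma,\rho)$ with $\rho(C,D)=\mu(C\bigtriangleup D)$: indeed $\rho(T^{h^{-1}}B,T^{h_{0}^{-1}}B)=2\mu(B)-2\mu(B\cap T^{hh_{0}^{-1}}B)$, which tends to $0$ as $h\to h_{0}$, since $hh_{0}^{-1}\to e$ and $g\mapsto\mu(B\cap T^{g}B)$ is continuous by continuity of the action. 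Therefore $\{T^{h^{-1}}B:h\in H\}$ is a compact, hence totally bounded, subset of the measure algebra, and I may choose $h_{1},\dots,h_{m}\in H$ so that every $T^{h^{-1}}B$ lies within $\rho$\-distance $\varepsilon/2$ of some $T^{h_{k}^{-1}}B$.

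It remains to assemble the bound. Applying $\Gamma$\-mixing to each of the finitely many pairs $(A,T^{h_{k}^{-1}}B)$ yields a single compact $K'$ (the union of the finitely many compacta it produces) such that $|\mu(T^{\gamma}A\cap T^{h_{k}^{-1}}B)-\mu(A)\mu(B)|<\varepsilon/2$ for all $\gamma\in\Gamma\setminus K'$ and all $k$. For arbitrary $h\in H$ choose $k$ with $\rho(T^{h^{-1}}B,T^{h_{k}^{-1}}B)<\varepsilon/2$; since $|\mu(E\cap C)-\mu(E\cap C')|\le\rho(C,C')$, the triangle inequality gives $|\mu(T^{\gamma}A\cap T^{h^{-1}}B)-\mu(A)\mu(B)|<\varepsilon$ for all $\gamma\in\Gamma\setminus K'$ and all $h\in H$. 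Setting $K=HK'$ and invoking the identity above, $|\mu(T^{g}A\cap B)-\mu(A)\mu(B)|<\varepsilon$ for every $g\in\mathcal{G}\setminus K$, which is exactly $\mathcal{G}$\-mixing. The only genuine obstacle is the uniformity over $H$, handled by the total boundedness of $\{T^{h^{-1}}B\}$; the factorization identity and the triangle\-inequality bookkeeping are routine.
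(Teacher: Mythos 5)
Your proof is correct and takes essentially the same route as the paper's: factor an arbitrary $g\in\mathcal{G}$ as $g=h\gamma$ with $h\in H$, $\gamma\in\Gamma$, reduce via the identity $\mu\left(T^{h\gamma}A\cap B\right)=\mu\left(T^{\gamma}A\cap T^{h^{-1}}B\right)$ to a $\Gamma$-mixing estimate whose second argument ranges over $\left\{ T^{h^{-1}}B\right\} _{h\in H}$, and exclude the compact set obtained by multiplying $H$ into the compact set furnished by $\Gamma$-mixing. The only difference is one of detail: the paper simply asserts that a single compact $G$ works for all pairs $C,D\in\left\{ A\right\} \cup\left\{ T^{h^{-1}}B\right\} _{h\in H}$, whereas you explicitly justify this uniformity over $H$ (continuity of $h\mapsto T^{h^{-1}}B$ into the measure algebra, compactness of its image, a finite $\varepsilon/2$-net), thereby filling in the step the paper's terse proof leaves to the reader.
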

\begin{proof}
Зафиксируем $\varepsilon>0$, $A,B\in\Sigma$ и $T\in\mathcal{M}_{\mathcal{G},\Gamma}$.
Тогда существует такое компактное $G$, что при $g\in\Gamma\setminus G$
имеем
\[
\mu\left(T^{g}C\cap D\right)\overset{\varepsilon}{\sim}\mu\left(C\right)\mu\left(D\right),
\]
где $C,D\in\left\{ A\right\} \cup\left\{ T^{h^{-1}}B\right\} _{h\in H}$.
Любой элемент $\mathcal{G}$ представляется в виде $hg$, где $h\in H$
и $g\in\Gamma$. При $hg\notin HG$, имеем 
\[
\mu\left(T^{hg}A\cap B\right)=\mu\left(T^{g}A\cap T^{h^{-1}}B\right)\overset{\varepsilon}{\sim}\mu\left(A\right)\mu\left(B\right).
\]
 В силу произвольности $\varepsilon$, $A$ и $B$, действие $T$
является $\Gamma$-перемешивающим.
\end{proof}

\section{Топологии и дополнительные метрики}

Цель этого параграфа состоит в получении более проверяемых условий
близости действий в поводок\-топологии.

Аналогично метрике $\mathrm{d}_{\mathcal{G}}$, в пространстве $\mathcal{A}_{\mathcal{G}}$
топологию можно задать метрикой
\[
\mathrm{a}_{\mathcal{G}}\left(T,S\right)=\sum_{i}\frac{1}{2^{i}}\sup_{g\in K_{i}}\mathrm{a}\left(T^{g},S^{g}\right),
\]
которая является равномерным пределом последовательности предметрик
\[
\mathrm{a}_{\mathcal{G}}^{\left(n\right)}\left(T,S\right)=\sum_{i\leqslant n}\frac{1}{2^{i}}\sup_{g\in K_{i}}\mathrm{a}\left(T^{g},S^{g}\right).
\]
 
\begin{claim}
\label{=000443=000442=000432:=000020=000442=00043E=00043F=00043E=00043B=00043E=000433=000438=000438=000020=00043C=000435=000442=000440=000438=00043A=000020=000430=000020=000438=000020=000434=000020=000441=00043E=000432=00043F=000430=000434=000430=00044E=000442=000020=000434=00043B=00044F=000020=000433=000440=000443=00043F=00043F}Топологии
метрик $\mathrm{a}_{\mathcal{G}}$ и $\mathrm{d}_{\mathcal{G}}$ совпадают.
\end{claim}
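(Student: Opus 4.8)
План состоит в том, чтобы доказать совпадение топологий, установив совпадение сходящихся последовательностей в обеих метриках; этого достаточно, поскольку оба пространства метризуемы. Одно включение тривиально: из неравенства $\mathrm{a}\left(P,Q\right)\leq\mathrm{d}\left(P,Q\right)$ для преобразований $P,Q\in\mathcal{A}$ немедленно следует, что $\mathrm{a}_{\mathcal{G}}\leq\mathrm{d}_{\mathcal{G}}$. Поэтому всякая $\mathrm{d}_{\mathcal{G}}$-сходящаяся последовательность является и $\mathrm{a}_{\mathcal{G}}$-сходящейся к тому же пределу, то есть $\mathrm{d}_{\mathcal{G}}$-топология не слабее $\mathrm{a}_{\mathcal{G}}$-топологии.

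Основная часть --- обратное включение: $\mathrm{a}_{\mathcal{G}}$-сходимость влечёт $\mathrm{d}_{\mathcal{G}}$-сходимость. Сначала я бы свёл задачу к одному компакту. Так как все слагаемые в определениях $\mathrm{a}_{\mathcal{G}}$ и $\mathrm{d}_{\mathcal{G}}$ неотрицательны и равномерно ограничены числом $2$, условие $\mathrm{a}_{\mathcal{G}}\left(T_{n},T\right)\to0$ равносильно тому, что для каждого фиксированного $i$ выполнено $\sup_{g\in K_{i}}\mathrm{a}\left(T_{n}^{g},T^{g}\right)\to0$; аналогичное утверждение (по теореме о мажорированной сходимости ряда) верно и для $\mathrm{d}_{\mathcal{G}}$. Таким образом, достаточно для каждого компакта $K=K_{i}$ доказать импликацию $\sup_{g\in K}\mathrm{a}\left(T_{n}^{g},T^{g}\right)\to0\Rightarrow\sup_{g\in K}\mathrm{d}\left(T_{n}^{g},T^{g}\right)\to0$.

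Эту импликацию я бы доказал от противного, опираясь на компактность $K$. Предположим, что $\sup_{g\in K}\mathrm{d}\left(T_{n}^{g},T^{g}\right)$ не стремится к нулю; тогда найдутся $\varepsilon>0$, подпоследовательность (которую снова обозначим $T_{n}$) и точки $g_{n}\in K$ с $\mathrm{d}\left(T_{n}^{g_{n}},T^{g_{n}}\right)>\varepsilon$. В силу компактности $K$, переходя к ещё одной подпоследовательности, можно считать, что $g_{n}\to g_{*}\in K$. Действие $T$ непрерывно относительно слабой топологии (доказанное выше утверждение о непрерывности $\mathcal{G}$-действий), а эта топология метризуется как метрикой $\mathrm{a}$, так и метрикой $\mathrm{d}$; поэтому $T^{g_{n}}\to T^{g_{*}}$ и в $\mathrm{a}$, и в $\mathrm{d}$. Из оценки $\mathrm{a}\left(T_{n}^{g_{n}},T^{g_{*}}\right)\leq\mathrm{a}\left(T_{n}^{g_{n}},T^{g_{n}}\right)+\mathrm{a}\left(T^{g_{n}},T^{g_{*}}\right)$, в которой оба слагаемых стремятся к нулю, получаем $T_{n}^{g_{n}}\to T^{g_{*}}$ в метрике $\mathrm{a}$. Но на $\mathcal{A}$ метрики $\mathrm{a}$ и $\mathrm{d}$ задают одну и ту же слабую топологию, поэтому $T_{n}^{g_{n}}\to T^{g_{*}}$ и в метрике $\mathrm{d}$. Тогда $\mathrm{d}\left(T_{n}^{g_{n}},T^{g_{n}}\right)\leq\mathrm{d}\left(T_{n}^{g_{n}},T^{g_{*}}\right)+\mathrm{d}\left(T^{g_{*}},T^{g_{n}}\right)\to0$, что противоречит неравенству $\mathrm{d}\left(T_{n}^{g_{n}},T^{g_{n}}\right)>\varepsilon$.

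Основным препятствием я считаю именно неравномерность эквивалентности метрик $\mathrm{a}$ и $\mathrm{d}$ на $\mathcal{A}$: они топологически эквивалентны, но не равномерно, поэтому нельзя напрямую оценить $\sup_{g\in K}\mathrm{d}$ через $\sup_{g\in K}\mathrm{a}$ с единым модулем. Ключевой приём, снимающий это препятствие, --- использование компактности $K$ вместе с непрерывностью предельного действия $T$, что позволяет \textquotedbl сконцентрировать\textquotedbl{} возможную расходимость в одной точке $g_{*}$ и прийти к противоречию.
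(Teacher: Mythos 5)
Ваше доказательство верно, и его ключевые ингредиенты те же, что и в статье: неравенство $\mathrm{a}\leq\mathrm{d}$ для тривиального направления, а для обратного --- компактность множеств $K_{i}$, непрерывность предельного действия $T$ и топологическая (но неравномерная) эквивалентность метрик $\mathrm{a}$ и $\mathrm{d}$ на $\mathcal{A}$. Различие в реализации обратного направления. В статье рассуждение прямое, через открытые покрытия: для фиксированных $\varepsilon$ и $n$ у каждой точки $g\in\cup_{i\leq n}K_{i}$ берутся число $\delta\left(g\right)$ и окрестность, в которой $T^{h}$ близко к $T^{g}$ сразу в обеих метриках; конечное подпокрытие дает $\delta=\min_{j}\delta\left(g_{j}\right)$ и явное вложение базисных окрестностей $\left(\mathrm{a}_{\mathcal{G}}^{\left(n\right)},T,\delta\right)\subset\left(\mathrm{d}_{\mathcal{G}}^{\left(n\right)},T,\varepsilon\right)$. Вы вместо этого сравниваете топологии через сходящиеся последовательности (что законно, так как обе топологии метризуемы), сводите вопрос к одному компакту с помощью мажорированной сходимости ряда и далее рассуждаете от противного, извлекая сходящуюся подпоследовательность $g_{n}\to g_{*}$ и «концентрируя» расходимость в одной точке. Выигрыш вашего пути --- он короче, не требует техники предметрик $\mathrm{a}_{\mathcal{G}}^{\left(n\right)}$, $\mathrm{d}_{\mathcal{G}}^{\left(n\right)}$ и подбора явных $\delta$; цена --- существенное использование секвенциальной компактности $K_{i}$. Извлечение сходящейся подпоследовательности законно лишь потому, что $\mathcal{G}$ по условию имеет счетную базу окрестностей (и, по теореме Биркгофа--Какутани, метризуема), так что ее компакты секвенциально компактны; этот шаг стоило оговорить явно, поскольку в компакте произвольного топологического пространства он невозможен. Подход статьи через конечные подпокрытия последовательностей не использует вовсе, поэтому не опирается на счетность базы окрестностей группы и, кроме того, дает явное, «количественное» сравнение баз окрестностей двух топологий, которое затем используется в статье при работе с поводок\-топологией.
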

\begin{proof}
Так как $\mathrm{a}\left(T^{g},S^{g}\right)\leqslant\mathrm{d}\left(T^{g},S^{g}\right)$
при $g\in\mathcal{G}$, множество $\left(\mathrm{d}_{\mathcal{G}},T,\varepsilon\right)$
содержится во множестве $\left(\mathrm{a}_{\mathcal{G}},T,\varepsilon\right)$.
Следовательно, $\mathrm{d}_{\mathcal{G}}$-топология не слабее $\mathrm{a}_{\mathcal{G}}$\-топологии.

Нам нужно показать, что при фиксированных $\varepsilon$ и $n$ найдется
такое $\delta$, что 
\[
\left(\mathrm{a}_{\mathcal{G}}^{\left(n\right)},T,\delta\right)\subset\left(\mathrm{d}_{\mathcal{G}}^{\left(n\right)},T,\varepsilon\right).
\]

Топологии метрик $\mathrm{a}$ и $\mathrm{d}$ совпадают, поэтому
для каждого $g\in\cup_{i\leqslant n}K_{i}$ существует число $\delta\left(g\right)$
такое, что для любого $U\in\mathcal{A}$, имеем 
\[
\mathrm{a}\left(T^{g},U\right)<\delta\left(g\right)\Rightarrow\mathrm{d}\left(T^{g},U\right)<\frac{\varepsilon}{2}.
\]
Возьмем окрестность элемента $g$, любой элемент $h$ которой удовлетворяет
условиям $\mathrm{d}\left(T^{g},T^{h}\right)<\frac{\varepsilon}{2}$
и $\mathrm{a}\left(T^{g},T^{h}\right)<\frac{\delta\left(g\right)}{2}$. 

Так как $\cup_{i\leqslant n}K_{i}$ --- компакт, существует его конечное
покрытие выбранными окрестностями с центрами в некоторых точках $\left\{ g_{j}\right\} $.
Положим $\delta=\min_{j}\delta\left(g_{j}\right)$.

Если $S\in\left(\mathrm{a}_{\mathcal{G}}^{\left(n\right)},T,\delta\right)$,
то для любого $g\in\cup_{i\leqslant n}K_{i}$ и некоторого $j$, имеем
\[
\mathrm{a}\left(T^{g_{j}},S^{g}\right)\leqslant\mathrm{a}\left(T^{g_{j}},T^{g}\right)+\mathrm{a}\left(T^{g},S^{g}\right)<\delta\left(g_{j}\right).
\]
Следовательно, 
\[
\mathrm{d}\left(T^{g},S^{g}\right)\leqslant\mathrm{d}\left(T^{g},T^{g_{j}}\right)+\mathrm{d}\left(T^{g_{j}},S^{g}\right)<\varepsilon,
\]
 и $S\in\left(\mathrm{d}_{\mathcal{G}}^{\left(n\right)},T,\varepsilon\right)$. 
\end{proof}
\begin{cor}
Поводок\-топологию можно задать с помощью метрики 
\[
\mathrm{w}_{\mathcal{G},\Gamma}\left(T,S\right)=\mathrm{a}_{\mathcal{G}}\left(T,S\right)+\sup_{g\in\Gamma}\mathrm{a}\left(T^{g},S^{g}\right).
\]
\end{cor}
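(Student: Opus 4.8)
План состоит в том, чтобы свести утверждение к уже доказанному совпадению топологий метрик $\mathrm{a}_{\mathcal{G}}$ и $\mathrm{d}_{\mathcal{G}}$. Поводок\-топология по определению есть $\mathrm{m}_{\mathcal{G},\Gamma}$\-топология, поэтому требуется показать лишь, что метрика $\mathrm{w}_{\mathcal{G},\Gamma}$ задает ту же топологию. Ключевое наблюдение: обе метрики имеют общее второе слагаемое $s\left(T,S\right)=\sup_{g\in\Gamma}\mathrm{a}\left(T^{g},S^{g}\right)$ (это предметрика, конечная в силу $\mathrm{a}\leq2$) и различаются лишь первым слагаемым, $\mathrm{d}_{\mathcal{G}}$ против $\mathrm{a}_{\mathcal{G}}$. Сначала я бы проверил, что $\mathrm{w}_{\mathcal{G},\Gamma}$ действительно метрика: это сумма метрики $\mathrm{a}_{\mathcal{G}}$ и предметрики $s$, так что все аксиомы, кроме тождества неразличимых, очевидны, а последнее следует из того, что $\mathrm{a}_{\mathcal{G}}\left(T,S\right)=0$ уже влечет $T=S$.

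Первое (простое) включение я бы получил из неравенства $\mathrm{a}\leq\mathrm{d}$ (четвертое из перечисленных свойств пространства $\mathcal{A}$): оно сразу дает $\mathrm{a}_{\mathcal{G}}\leq\mathrm{d}_{\mathcal{G}}$ поточечно, а значит $\mathrm{w}_{\mathcal{G},\Gamma}\leq\mathrm{m}_{\mathcal{G},\Gamma}$. Отсюда $\left(\mathrm{m}_{\mathcal{G},\Gamma},T,\varepsilon\right)\subset\left(\mathrm{w}_{\mathcal{G},\Gamma},T,\varepsilon\right)$ при всех $T,\varepsilon$, и потому всякое $\mathrm{w}$\-открытое множество $\mathrm{m}$\-открыто, то есть $\mathrm{m}$\-топология не слабее $\mathrm{w}$\-топологии.

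Содержательным является обратное включение: при фиксированных $T$ и $\varepsilon>0$ нужно найти $\delta>0$ так, чтобы $\left(\mathrm{w}_{\mathcal{G},\Gamma},T,\delta\right)\subset\left(\mathrm{m}_{\mathcal{G},\Gamma},T,\varepsilon\right)$. Здесь я бы воспользовался доказанным выше утверждением о совпадении топологий метрик $\mathrm{a}_{\mathcal{G}}$ и $\mathrm{d}_{\mathcal{G}}$: по нему найдется $\delta_{1}>0$, для которого $\left(\mathrm{a}_{\mathcal{G}},T,\delta_{1}\right)\subset\left(\mathrm{d}_{\mathcal{G}},T,\frac{\varepsilon}{2}\right)$. Полагая $\delta=\min\left\{ \delta_{1},\frac{\varepsilon}{2}\right\}$, из $\mathrm{w}_{\mathcal{G},\Gamma}\left(T,S\right)<\delta$ я получил бы одновременно $\mathrm{a}_{\mathcal{G}}\left(T,S\right)<\delta_{1}$, откуда $\mathrm{d}_{\mathcal{G}}\left(T,S\right)<\frac{\varepsilon}{2}$, и $s\left(T,S\right)<\frac{\varepsilon}{2}$; сложив эти оценки, получил бы $\mathrm{m}_{\mathcal{G},\Gamma}\left(T,S\right)<\varepsilon$, то есть $S\in\left(\mathrm{m}_{\mathcal{G},\Gamma},T,\varepsilon\right)$.

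Главное препятствие сосредоточено именно в этом втором направлении, и оно полностью ложится на предыдущее утверждение: эквивалентность $\mathrm{a}_{\mathcal{G}}$\- и $\mathrm{d}_{\mathcal{G}}$\-окрестностей не равномерна, поскольку выбор $\delta_{1}$ зависит от центра $T$. Концептуально здесь используется то, что топология суммы двух (пред)метрик есть точная верхняя грань их топологий, а поскольку слагаемое $s$ у $\mathrm{m}$ и $\mathrm{w}$ общее, а топологии $\mathrm{d}_{\mathcal{G}}$ и $\mathrm{a}_{\mathcal{G}}$ совпадают, совпадают и верхние грани. Объединив оба включения, я заключил бы, что $\mathrm{m}$\- и $\mathrm{w}$\-топологии совпадают, то есть $\mathrm{w}_{\mathcal{G},\Gamma}$ задает поводок\-топологию.
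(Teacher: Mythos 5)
Your proof is correct and follows essentially the same route as the paper: one inclusion comes from the pointwise inequality $\mathrm{w}_{\mathcal{G},\Gamma}\leq\mathrm{m}_{\mathcal{G},\Gamma}$, and the other reduces to the previously proven claim that the $\mathrm{a}_{\mathcal{G}}$- and $\mathrm{d}_{\mathcal{G}}$-topologies coincide. Your version is in fact slightly more careful than the paper's one-line deduction (splitting $\varepsilon$ in half and taking $\delta=\min\left\{ \delta_{1},\frac{\varepsilon}{2}\right\}$ so that the sum of the two terms is genuinely below $\varepsilon$), but the idea is identical.
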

\begin{proof}
Пусть $\varepsilon>0$. Выберем $\delta>0$ такое, что 
\[
\left(\mathrm{a}_{\mathcal{G}},T,\delta\right)\subset\left(\mathrm{d}_{\mathcal{G}},T,\varepsilon\right).
\]
Тогда 
\[
\left(\mathrm{w}_{\mathcal{G},\Gamma},T,\delta\right)\subset\left(\mathrm{m}_{\mathcal{G},\Gamma},T,\varepsilon\right).
\]
Так как $\mathrm{w}_{\mathcal{G},\Gamma}\leq\mathrm{m}_{\mathcal{G},\Gamma}$
верно и включение 
\[
\left(\mathrm{m}_{\mathcal{G},\Gamma},T,\varepsilon\right)\subset\left(\mathrm{w}_{\mathcal{G},\Gamma},T,\varepsilon\right),
\]
то есть топологии совпадают. 
\end{proof}
Метрика $\mathrm{w}$ является пределом возрастающей последовательности
предметрик 
\[
\mathrm{w}_{\mathcal{G},\Gamma}^{\left(n\right)}\left(T,S\right)=\mathrm{a}_{\mathcal{G}}^{\left(n\right)}\left(T,S\right)+\sup_{g\in\Gamma}\mathrm{a}\left(T^{g},S^{g}\right),
\]
каждая из которых, в свою очередь, является пределом возрастающей
последовательности предметрик 
\[
\mathrm{w}_{\mathcal{G},\Gamma}^{\left(n,k\right)}\left(T,S\right)=\mathrm{a}_{\mathcal{G}}^{\left(n,k\right)}\left(T,S\right)+\sup_{g\in\Gamma}\mathrm{a}_{k}\left(T^{g},S^{g}\right),
\]
где 
\[
\mathrm{a}_{\mathcal{G}}^{\left(n,k\right)}\left(T,S\right)=\sum_{i\leqslant n}\frac{1}{2^{i}}\sup_{g\in K_{i}}\mathrm{a}_{k}\left(T^{g},S^{g}\right).
\]
Таким образом, базу окрестностей поводок\-топологии образуют и множества
\[
\left\{ \left(\mathrm{w}_{\mathcal{G},\Gamma}^{\left(n,k\right)},T,\varepsilon\right)\right\} _{n,k\in\mathbb{N},\varepsilon>0,T\in\mathcal{M}_{\mathcal{G},\Gamma}}.
\]

Наконец, зададим еще одно семейство предметрик в $\mathcal{M}_{\mathcal{G},\Gamma}$
\[
\mathrm{s}^{\left(n,k\right)}\left(T,S\right)=\sup_{g\in\Gamma\cup\left(\cup_{i\leq n}K_{i}\right)}\mathrm{a}_{k}\left(T^{g},S^{g}\right).
\]

Параметры $\mathcal{G},\Gamma$ для этих предметик писать не будем,
так как они заданы только в одном из рассматриваемых пространств.
\begin{thm}
Базу поводок\-топологии образуют множества 
\[
\left\{ \left(\mathrm{s}^{\left(n,k\right)},T,\varepsilon\right)\right\} _{n,k\in\mathbb{N},\varepsilon>0,T\in\mathcal{M}_{\mathcal{G},\Gamma}}.
\]
\end{thm}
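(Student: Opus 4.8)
The plan is to show that the family $\{(\mathrm{s}^{(n,k)},T,\varepsilon)\}$ and the family $\{(\mathrm{w}_{\mathcal{G},\Gamma}^{(n,k)},T,\varepsilon)\}$ — which the preceding discussion has already identified as a base of the leash\-topology — generate the same open sets. Both $\mathrm{s}^{(n,k)}$ and $\mathrm{w}_{\mathcal{G},\Gamma}^{(n,k)}$ are pre\-metrics (symmetric and satisfying the triangle inequality, since each is assembled from the semi\-metrics $\mathrm{a}_k(T^g,S^g)$ by taking suprema and nonnegative combinations), and everything is bounded by $2$, so all suprema are finite. The whole argument reduces to two elementary comparison inequalities between these two pre\-metrics for each fixed pair $(n,k)$.

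First I would record that, writing the index set as $\Gamma\cup\bigcup_{i\le n}K_i$, the supremum over the union is the maximum of the separate suprema:
\[
\mathrm{s}^{(n,k)}(T,S)=\max\Bigl\{\sup_{g\in\Gamma}\mathrm{a}_k(T^g,S^g),\ \max_{i\le n}\sup_{g\in K_i}\mathrm{a}_k(T^g,S^g)\Bigr\}.
\]
Comparing this with $\mathrm{w}_{\mathcal{G},\Gamma}^{(n,k)}(T,S)=\sum_{i\le n}\tfrac{1}{2^i}\sup_{g\in K_i}\mathrm{a}_k(T^g,S^g)+\sup_{g\in\Gamma}\mathrm{a}_k(T^g,S^g)$, two bounds drop out. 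Since every summand of $\mathrm{w}$ is at most $\mathrm{s}^{(n,k)}$ and $\sum_{i\le n}2^{-i}<1$, we get $\mathrm{w}_{\mathcal{G},\Gamma}^{(n,k)}\le 2\,\mathrm{s}^{(n,k)}$. In the other direction, $\sup_{g\in\Gamma}\mathrm{a}_k\le\mathrm{w}_{\mathcal{G},\Gamma}^{(n,k)}$ and $\sup_{g\in K_i}\mathrm{a}_k\le 2^i\,\mathrm{w}_{\mathcal{G},\Gamma}^{(n,k)}\le 2^n\,\mathrm{w}_{\mathcal{G},\Gamma}^{(n,k)}$ for every $i\le n$, so taking the maximum gives $\mathrm{s}^{(n,k)}\le 2^n\,\mathrm{w}_{\mathcal{G},\Gamma}^{(n,k)}$.

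With these in hand I would finish in two steps. To see that each $(\mathrm{s}^{(n,k)},T,\varepsilon)$ is open in the leash\-topology, I would take an arbitrary point $S$ in it, set $\rho=(\varepsilon-\mathrm{s}^{(n,k)}(T,S))/2^n>0$, and use $\mathrm{s}^{(n,k)}\le 2^n\,\mathrm{w}_{\mathcal{G},\Gamma}^{(n,k)}$ together with the triangle inequality for $\mathrm{s}^{(n,k)}$ to verify $(\mathrm{w}_{\mathcal{G},\Gamma}^{(n,k)},S,\rho)\subset(\mathrm{s}^{(n,k)},T,\varepsilon)$; since the $\mathrm{w}$\-balls are basic open sets, $(\mathrm{s}^{(n,k)},T,\varepsilon)$ is a union of open sets, hence open. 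Conversely, given a leash\-open $U$ and a point $T\in U$, the known base supplies some $(\mathrm{w}_{\mathcal{G},\Gamma}^{(n,k)},T,\varepsilon)\subset U$, and then $\mathrm{w}_{\mathcal{G},\Gamma}^{(n,k)}\le 2\,\mathrm{s}^{(n,k)}$ yields $(\mathrm{s}^{(n,k)},T,\varepsilon/2)\subset(\mathrm{w}_{\mathcal{G},\Gamma}^{(n,k)},T,\varepsilon)\subset U$. Together these say precisely that $\{(\mathrm{s}^{(n,k)},T,\varepsilon)\}$ is a base of the leash\-topology.

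The computations are routine; the only point demanding a little care is the openness step, where one cannot simply compare the two $\varepsilon$\-balls with the same centre but must re\-centre at an interior point $S$ and invoke the triangle inequality — exactly because $\mathrm{s}^{(n,k)}$ is not dominated by the leash\-metric itself, only by the auxiliary pre\-metric $\mathrm{w}_{\mathcal{G},\Gamma}^{(n,k)}$ whose balls are already known to be open. I expect this re\-centring to be the main, and essentially the only, subtlety.
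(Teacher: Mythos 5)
Your proof is correct and takes essentially the same route as the paper: the paper's entire proof consists of the remark that equivalent pre-metrics yield the same base together with the two-sided bound $\mathrm{w}_{\mathcal{G},\Gamma}^{(n,k)}\leq\mathrm{s}^{(n,k)}\leq2^{n}\mathrm{w}_{\mathcal{G},\Gamma}^{(n,k)}$, and your argument is exactly this equivalence with the re-centering and ball-inclusion details written out. Incidentally, your constant is the right one: the paper's left inequality $\mathrm{w}_{\mathcal{G},\Gamma}^{(n,k)}\leq\mathrm{s}^{(n,k)}$ actually fails when the $\Gamma$-supremum and all the $K_{i}$-suprema coincide (then $\mathrm{w}_{\mathcal{G},\Gamma}^{(n,k)}=(2-2^{-n})\,\mathrm{s}^{(n,k)}$), whereas your $\mathrm{w}_{\mathcal{G},\Gamma}^{(n,k)}\leq2\,\mathrm{s}^{(n,k)}$ is what holds and is all that the topological equivalence requires.
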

\begin{proof}
Достаточно заметить, что эквивалентные предметрики имеют одинаковую
базу топологии и 
\[
\mathrm{w}_{\mathcal{G},\Gamma}^{\left(n,k\right)}\leq\mathrm{s}^{\left(n,k\right)}\leq2^{n}\mathrm{w}_{\mathcal{G},\Gamma}^{\left(n,k\right)}.
\]
\end{proof}

\section{Слабое рохлинское свойство}

Множество действий вида $U^{-1}TU$, где $U\in\mathcal{A}$, называется
\emph{орбитой} точки $T$. 

Множество действий называется \emph{массивным}, если оно является
всюду плотным пересечением счетного числа открытых множеств. Массивность
множества действий с плотной орбитой называется \emph{слабым рохлинским
свойством}. Мы покажем, что пространство $\mathcal{M}_{\mathcal{G},\Gamma}$
этим свойством обладает. 
\begin{claim}
\label{=00041B=000435=00043C=00043C=000430:=000441=00043E=00043F=000440=00044F=000436=000435=00043D=000438=000435=000020=000434=000020=00043C=000020=00043D=000435=00043F=000440=000435=000440=00044B=000432=00043D=000430=00044F=000020=000433=000440=000443=00043F=00043F=00044B}Для
любого $\Gamma$\-перемешивающего $\mathcal{G}$\-действия $T$
отображение $U\mapsto U^{-1}TU$ из $\mathcal{A}_{\mathcal{}}$ в
$\mathcal{M}_{\mathcal{G},\Gamma}$ непрерывно.
\end{claim}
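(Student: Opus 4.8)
The plan is to exploit the neighbourhood base $\left\{\left(\mathrm{s}^{(n,k)},T,\varepsilon\right)\right\}$ for the leash-topology furnished by the preceding theorem. This reduces the continuity of $U\mapsto U^{-1}TU$ at a point $U$ to the following assertion: for every $n,k\in\mathbb{N}$ and every $\varepsilon>0$ one must produce a weak ($\mathrm{d}$-) neighbourhood of $U$ in $\mathcal{A}$ such that every $V$ in it satisfies $\mathrm{s}^{(n,k)}(V^{-1}TV,U^{-1}TU)<\varepsilon$. (That $U^{-1}TU$ indeed lies in $\mathcal{M}_{\mathcal{G},\Gamma}$ follows from $\mu(T^{g}UA\cap UB)\to\mu(A)\mu(B)$, which is the $\Gamma$-mixing of $T$ applied to the sets $UA,UB$ together with the measure-preservation of $U$.) The decisive gain from this reduction is that $\mathrm{a}_{k}$ is a \emph{finite} sum over $i,j\le k$, so only the finitely many test sets $A_{1},\dots,A_{k}$ intervene, and the supremum over $\Gamma\cup(\cup_{i\le n}K_{i})$ can be majorised by a supremum over the whole group.

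First I would carry out the basic computation. Since the Koopman operator of $V$ is unitary, for every $g\in\mathcal{G}$ one has $\langle V^{-1}T^{g}V\chi_{A_{i}},\chi_{A_{j}}\rangle=\langle T^{g}V\chi_{A_{i}},V\chi_{A_{j}}\rangle$, and subtracting the analogous expression for $U$ and inserting the mixed term $\langle T^{g}U\chi_{A_{i}},V\chi_{A_{j}}\rangle$ yields
\[
\langle T^{g}(V-U)\chi_{A_{i}},V\chi_{A_{j}}\rangle+\langle T^{g}U\chi_{A_{i}},(V-U)\chi_{A_{j}}\rangle.
\]
Here lies the crux, and the resolution of the main obstacle: because $T^{g}$, $U$, $V$ are all isometries and $\|\chi_{A_{j}}\|\le1$, Cauchy--Schwarz bounds this difference by $\|(V-U)\chi_{A_{i}}\|+\|(V-U)\chi_{A_{j}}\|$, \emph{uniformly in $g$}. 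The operator norm of $T^{g}$ is always $1$, so nothing deteriorates as $g$ ranges over the unbounded set $\Gamma$.

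It then remains to translate these $L^{2}$-quantities into the weak metric: $\|(V-U)\chi_{A_{i}}\|^{2}=\mu(VA_{i}\bigtriangleup UA_{i})$, and this is precisely what $\mathrm{d}(V,U)$ controls. Summing the uniform estimate over $i,j\le k$ gives
\[
\sup_{g\in\mathcal{G}}\mathrm{a}_{k}(V^{-1}T^{g}V,U^{-1}T^{g}U)\le\sum_{i,j\le k}\frac{1}{2^{i+j}}\left(\sqrt{\mu(VA_{i}\bigtriangleup UA_{i})}+\sqrt{\mu(VA_{j}\bigtriangleup UA_{j})}\right),
\]
a finite sum of quantities each tending to $0$ as $V\to U$ weakly. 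Imposing the finitely many conditions that $\mu(VA_{i}\bigtriangleup UA_{i})$ be small for $i\le k$ therefore forces $\mathrm{s}^{(n,k)}(V^{-1}TV,U^{-1}TU)<\varepsilon$, which completes the argument. The only genuine difficulty is the unbounded supremum over $\Gamma$, where conjugation is not uniformly continuous for a general bounded operator; it is overcome solely because the unitarity of the $T^{g}$ supplies the uniform-in-$g$ estimate directly, and the mixing hypothesis on $T$ enters only to guarantee that $U^{-1}TU$ belongs to $\mathcal{M}_{\mathcal{G},\Gamma}$.
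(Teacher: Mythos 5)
Your proof is correct and takes essentially the same approach as the paper's: both work through the $\mathrm{s}^{\left(n,k\right)}$-neighbourhood base of the leash topology and rest on the same key point, namely that since each $T^{g}$ preserves the measure (is unitary), conjugation perturbs every correlation $\mu\left(T^{g}A_{i}\cap A_{j}\right)$, $i,j\le k$, by an amount controlled by the symmetric differences of the finitely many test sets, uniformly in $g\in\Gamma\cup\left(\cup_{i\le n}K_{i}\right)$. The only cosmetic differences are that you estimate at an arbitrary point $U$ via Koopman operators and Cauchy--Schwarz (obtaining $\sqrt{\mu\left(VA_{i}\bigtriangleup UA_{i}\right)}$-type bounds), whereas the paper first reduces to continuity at the identity using that $\mathcal{A}$ is a topological group and then applies the direct $L^{1}$ bound $\mu\left(UA_{i}\bigtriangleup A_{i}\right)+\mu\left(UA_{j}\bigtriangleup A_{j}\right)$; you also make explicit the (implicit in the paper) fact that $U^{-1}TU$ remains $\Gamma$-mixing.
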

\begin{proof}
Так как $\mathcal{A}$ --- топологическая группа, достаточно показать
непрерывность отображения в единице $I$ этой группы. Для каждого
$\varepsilon>0$, любых $n,k$, зафиксируем $\left(\mathrm{s}^{\left(n,k\right)},T,\varepsilon\right)$-окрестность
действия $T$.

Пусть $\mathcal{V}\left(I\right)\subset\mathcal{A}$ такое открытое
множество, что 
\[
U\in\mathcal{V}\left(I\right)\Rightarrow\forall i\leq k,\mu\left(A_{i}\Delta UA_{i}\right)<\frac{\varepsilon}{2}.
\]
Тогда, для $U\in\mathcal{V}\left(I\right)$, любого $g\in\mathcal{G}$
и $i,j\leq k$, имеем 
\[
\left|\mu\left(T^{g}A_{i}\cap A_{j}\right)-\mu\left(U^{-1}T^{g}UA_{i}\cap A_{j}\right)\right|=
\]

\[
=\left|\mu\left(T^{g}A_{i}\cap A_{j}\right)-\mu\left(T^{g}UA_{i}\cap UA_{j}\right)\right|\overset{\varepsilon}{\sim}
\]
\[
\overset{\varepsilon}{\sim}\left|\mu\left(T^{g}A_{i}\cap A_{j}\right)-\mu\left(T^{g}A_{i}\cap A_{j}\right)\right|=0.
\]

Следовательно, для всех $g\in\mathcal{G}$, имеем 
\[
\mathrm{a}_{k}\left(U^{-1}T^{g}U,T^{g}\right)<\varepsilon.
\]
Тогда
\[
\mathrm{s}^{\left(n,k\right)}\left(U^{-1}TU,I^{-1}TI\right)=\sup_{g\in\Gamma\cup\left(\cup_{i\leq n}K_{i}\right)}\mathrm{a}_{k}\left(U^{-1}T^{g}U,T^{g}\right)<\varepsilon,
\]

Значит любая точка из окрестности $\mathcal{V}$ переходит в точку
из $\left(\mathrm{s}^{\left(n,k\right)},T,\varepsilon\right)$\-окрестности
$T$.
\end{proof}
\begin{lem}
\label{=000443=000442=000432:=00043D=000435=00043F=000440=000435=000440=00044B=000432=00043D=00043E=000441=000442=00044C=000020=000441=00043E=00043F=000440=00044F=000436=000435=00043D=000438=00044F}При
фиксированном $U\in\mathcal{A}$, отображение $T\mapsto U^{-1}TU$
из пространства $\mathcal{M}_{\mathcal{G},\Gamma}$ в себя непрерывно.
\end{lem}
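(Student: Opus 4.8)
The plan is to work directly with the neighborhood base $\left\{\left(\mathrm{s}^{\left(n,k\right)},T,\varepsilon\right)\right\}$ of the leash-topology furnished by the previous theorem, so that continuity of $T\mapsto U^{-1}TU$ reduces to the following: given target data $n,k\in\mathbb{N}$ and $\varepsilon>0$ at the image point $U^{-1}TU$, produce $n',k',\delta$ so that $\mathrm{s}^{\left(n',k'\right)}\left(T,S\right)<\delta$ forces $\mathrm{s}^{\left(n,k\right)}\left(U^{-1}TU,U^{-1}SU\right)<\varepsilon$. First I would note that the map is well defined into $\mathcal{M}_{\mathcal{G},\Gamma}$: conjugation by $U$ preserves the group law and the continuity of $g\mapsto\mu\left(U^{-1}T^{g}UA\cap B\right)$, and since $\mu\left(U^{-1}T^{g}UA\cap B\right)=\mu\left(T^{g}UA\cap UB\right)\to\mu\left(UA\right)\mu\left(UB\right)=\mu\left(A\right)\mu\left(B\right)$ along $\Gamma$, the $\Gamma$\-mixing property is conjugacy invariant.

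The heart of the argument is a pointwise (in $g$) estimate of $\mathrm{a}_{k}\left(U^{-1}T^{g}U,U^{-1}S^{g}U\right)$. Using that $U$ preserves $\mu$, each entry equals $\left|\mu\left(T^{g}UA_{i}\cap UA_{j}\right)-\mu\left(S^{g}UA_{i}\cap UA_{j}\right)\right|$. Since $\left\{A_{i}\right\}$ is dense in $\Sigma$, for each $i\leq k$ I would choose an index $p_{i}$ with $\mu\left(UA_{i}\bigtriangleup A_{p_{i}}\right)<\eta$ for a small $\eta$ fixed below, and set $k'=\max_{i\leq k}p_{i}$. Applying the elementary bound $\left|\mu\left(P\cap Q\right)-\mu\left(P'\cap Q'\right)\right|\leq\mu\left(P\bigtriangleup P'\right)+\mu\left(Q\bigtriangleup Q'\right)$ together with measure preservation by $T^{g}$ and $S^{g}$, I may replace $UA_{i},UA_{j}$ by $A_{p_{i}},A_{p_{j}}$ at a cost of at most $4\eta$ per pair, uniformly in $g$:
\[
\left|\mu\left(T^{g}UA_{i}\cap UA_{j}\right)-\mu\left(S^{g}UA_{i}\cap UA_{j}\right)\right|\leq\left|\mu\left(T^{g}A_{p_{i}}\cap A_{p_{j}}\right)-\mu\left(S^{g}A_{p_{i}}\cap A_{p_{j}}\right)\right|+4\eta.
\]
Bounding each retained entry by $2^{p_{i}+p_{j}}\mathrm{a}_{k'}\left(T^{g},S^{g}\right)$ and summing against the weights $2^{-i-j}$ then gives $\mathrm{a}_{k}\left(U^{-1}T^{g}U,U^{-1}S^{g}U\right)\leq C\,\mathrm{a}_{k'}\left(T^{g},S^{g}\right)+4\eta$, where $C=\sum_{i,j\leq k}2^{p_{i}+p_{j}-i-j}$ is a finite constant depending only on $U$ and $k$.

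Since the conjugation acts coordinatewise on the family $\left\{T^{g}\right\}$, the index set $\Gamma\cup\left(\cup_{i\leq n}K_{i}\right)$ is unchanged, so taking the supremum over this set yields $\mathrm{s}^{\left(n,k\right)}\left(U^{-1}TU,U^{-1}SU\right)\leq C\,\mathrm{s}^{\left(n,k'\right)}\left(T,S\right)+4\eta$. Choosing $\eta<\varepsilon/8$ and then $\delta=\varepsilon/\left(2C\right)$ completes the proof. The main obstacle is precisely that $\left\{A_{i}\right\}$ need not be $U$\-invariant, which forces the approximation step $UA_{i}\approx A_{p_{i}}$ and produces the constant $C$; one must verify this constant is finite and that the residual error $4\eta$ is uniform in $g$, but for a continuity statement the size of $C$ is immaterial since it only shrinks the admissible $\delta$.
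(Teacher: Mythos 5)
Your proof is correct and takes essentially the same route as the paper's own: rewrite $\mu\left(U^{-1}T^{g}UA_{i}\cap A_{j}\right)=\mu\left(T^{g}UA_{i}\cap UA_{j}\right)$ via measure preservation, approximate each $UA_{i}$ by a member $A_{p_{i}}$ of the dense family, and transfer the resulting uniform-in-$g$ estimate through the $\mathrm{s}^{\left(n,k\right)}$-neighborhood base. If anything, your explicit constant $C=\sum_{i,j\leq k}2^{p_{i}+p_{j}-i-j}$ is more careful bookkeeping than the paper, whose proof silently drops the weight $2^{-a-b}$ when extracting the individual entry $\left|\mu\left(SA_{a}\cap A_{b}\right)-\mu\left(T^{g}A_{a}\cap A_{b}\right)\right|$ from the bound $\mathrm{a}_{k}\left(S,T^{g}\right)<\varepsilon/2$.
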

\begin{proof}
Для произвольных $\varepsilon>0$ и $n$ найдем такое число $k$,
что 
\[
S\in\left(\mathrm{a}_{k},T^{g},\frac{\varepsilon}{2}\right)\Rightarrow U^{-1}SU\in\left(\mathrm{a}_{n},U^{-1}T^{g}U,\varepsilon\right),
\]
для всех $g\in\mathcal{G}$. 

Число $k$ выберем таким, что для каждого $i\leqslant n$ найдется
$j\leqslant k$ для которого 
\[
\mu\left(UA_{i}\bigtriangleup A_{j}\right)<\frac{\varepsilon}{8}.
\]

Тогда для произвольного $g\in\mathcal{G}$ и $S\in\left(\mathrm{a}_{k},T^{g},\frac{\varepsilon}{2}\right)$,
имеем, 
\[
\mathrm{a}_{n}\left(U^{-1}S^{g}U,U^{-1}T^{g}U\right)\leqslant
\]
\[
\leqslant\sup_{i,j\leqslant n}\left|\mu\left(U^{-1}S^{g}UA_{i}\cap A_{j}\right)-\mu\left(U^{-1}T^{g}UA_{i}\cap A_{j}\right)\right|=
\]
\[
=\sup_{i,j\leqslant n}\left|\mu\left(S^{g}UA_{i}\cap UA_{j}\right)-\mu\left(T^{g}UA_{i}\cap UA_{j}\right)\right|.
\]
Пусть $A_{a}$ и $A_{b}$ --- множества из набора $q_{k}$, приближающие
с точностью до $\frac{\varepsilon}{8}$ множества $UA_{i}$ и $UA_{j}$
соответственно. Тогда 
\[
\left|\mu\left(SUA_{i}\cap UA_{j}\right)-\mu\left(T^{g}UA_{i}\cap UA_{j}\right)\right|<
\]
\[
<\left|\mu\left(SA_{a}\cap A_{b}\right)-\mu\left(T^{g}A_{a}\cap A_{b}\right)\right|+\frac{\varepsilon}{2}<\varepsilon.
\]
Следовательно, 
\[
\mathrm{a}_{n}\left(U^{-1}SU,U^{-1}T^{g}U\right)<\varepsilon.
\]

Тогда, для выбранных $\varepsilon,n,k$ , и любого натурального $i$,
имеем 
\[
S\in\left(\mathrm{s}^{\left(i,k\right)},T,\frac{\varepsilon}{2}\right)\Rightarrow U^{-1}SU\in\left(\mathrm{s}^{\left(i,n\right)},U^{-1}TU,\varepsilon\right).
\]
\end{proof}
Нам понадобятся известный факт, что пространство $\left(X,\Sigma,\mu\right)$
изоморфно любой своей не более чем счетной тензорной степени, то есть
пространству $\left(\times_{i}X,\otimes_{i}\Sigma,\otimes_{i}\mu\right)$.
Если набор действий $\left\{ T_{i}\right\} $ содержится в $\mathcal{M}_{\mathcal{G},\Gamma}$,
то $\times_{i}T_{i}\in\mathcal{M}_{\mathcal{G},\Gamma}$.
\begin{lem}
\label{=00043B=000435=00043C:=000020=000434=000435=00043A=000430=000440=000442=00043E=000432=00043E=000020=00043F=000440=00043E=000438=000437=000432=000435=000434=000435=00043D=000438=000435=000020=00043F=000440=000438=000431=00043B=000438=000436=000430=000435=000442=000020=00043B=00044E=000431=00043E=000439=000020=00043C=00043D=00043E=000436=000438=000442=000435=00043B=00044C}Пусть
$T,S$ --- два $\Gamma$-перемешивающих $\mathcal{G}$\-действия.
Тогда, сопряженные прямому произведению $T\times S$, с любой точностью
приближают $T$ в поводок\-топологии.
\end{lem}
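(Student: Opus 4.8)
План состоит в следующем. Как показано выше, базу поводок-топологии образуют множества $\left(\mathrm{s}^{\left(n,k\right)},T,\varepsilon\right)$, поэтому достаточно для произвольных $n,k\in\mathbb{N}$ и $\varepsilon>0$ предъявить такое $U\in\mathcal{A}$, что $U^{-1}\left(T\times S\right)U\in\left(\mathrm{s}^{\left(n,k\right)},T,\varepsilon\right)$. Напомню, что прямое произведение $T\times S$ лежит в $\mathcal{M}_{\mathcal{G},\Gamma}$ и отождествляется с действием на $X$ посредством фиксированного изоморфизма $\Phi\colon X\times X\to X$ (именно для этого и нужно, чтобы $S$ было $\Gamma$-перемешивающим; в самом сопоставлении множеств действие $S$ больше нигде не участвует). Все вычисления я буду вести в пространстве $Y=X\times X$ с мерой-произведением $\mu_{Y}$.

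Ключевое наблюдение --- что $T$ служит факторой по первой координате действия $T\times S$: на множествах вида $B\times X$ произведение действует в точности как $T$, поскольку $S^{g}X=X$, и потому
\[
\left(T\times S\right)^{g}\left(B\times X\right)=T^{g}B\times X,\qquad\mu_{Y}\left(\left(B_{1}\times X\right)\cap\left(B_{2}\times X\right)\right)=\mu\left(B_{1}\cap B_{2}\right).
\]
Поэтому сначала я построю сохраняющий меру изоморфизм $V\colon X\to Y$, переводящий каждое из множеств $A_{1},\dots,A_{k}$ точно в $A_{i}\times X$. Для этого возьму атомы $P_{1},\dots,P_{m}$ конечной алгебры, порожденной множествами $A_{1},\dots,A_{k}$, и на каждом $P_{l}$ положу $V$ произвольным изоморфизмом пространств Лебега $P_{l}\to P_{l}\times X$ (он существует, так как оба пространства непрерывны и имеют одну и ту же меру $\mu\left(P_{l}\right)$). Склеивая эти отображения по разбиению $\left\{ P_{l}\right\}$, получу изоморфизм $V$, для которого $VA_{i}=A_{i}\times X$ при всех $i\leq k$. Искомое преобразование определю равенством $U=\Phi\circ V\in\mathcal{A}$.

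Остается проверить попадание в окрестность. При таком $U$ сопряжение $U^{-1}\left(T\times S\right)U$ (рассматриваемое на $X$) переносится изоморфизмом $\Phi$ в $V^{-1}\left(T\times S\right)V$, так что, пользуясь тем, что $V$ сохраняет меру и $VA_{i}=A_{i}\times X$, для любого $g\in\mathcal{G}$ и любых $i,j\leq k$ получаю
\[
\mu\left(U^{-1}\left(T\times S\right)^{g}UA_{i}\cap A_{j}\right)=\mu_{Y}\left(\left(T\times S\right)^{g}\left(A_{i}\times X\right)\cap\left(A_{j}\times X\right)\right)=\mu\left(T^{g}A_{i}\cap A_{j}\right).
\]
Следовательно, $\mathrm{a}_{k}\left(U^{-1}\left(T\times S\right)^{g}U,T^{g}\right)=0$ сразу для всех $g\in\mathcal{G}$, откуда $\mathrm{s}^{\left(n,k\right)}\left(U^{-1}\left(T\times S\right)U,T\right)=0<\varepsilon$, что и требуется.

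Главная трудность, которую обычно приходится преодолевать в подобных утверждениях, --- это равномерность по $g$ из (возможно, некомпактного) множества $\Gamma$, входящего в определение предметрики $\mathrm{s}^{\left(n,k\right)}$. В данном случае она обходится бесплатно: совпадение мер на множествах $A_{1},\dots,A_{k}$ точное и достигается одновременно для всех $g\in\mathcal{G}$, поэтому соответствующий супремум обращается в нуль. Единственное место, требующее аккуратности, --- построение изоморфизма $V$ склейкой по атомам $P_{l}$, но это стандартный факт об изоморфности сепарабельных пространств Лебега одинаковой непрерывной меры.
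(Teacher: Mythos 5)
Ваше доказательство верно и по существу совпадает с доказательством из статьи: там тоже берется сохраняющий меру изоморфизм $v\colon X\to X\times X$ с $vA_{i}=A_{i}\times X$ при $i\leqslant k$, после чего точное равенство $\mu\left(v^{-1}\left(T^{g}\times S^{g}\right)vA_{i}\cap A_{j}\right)=\mu\left(T^{g}A_{i}\cap A_{j}\right)$ дает нулевое $\mathrm{a}_{k}$-расстояние сразу для всех $g\in\mathcal{G}$, что и снимает вопрос о равномерности по $\Gamma$. Единственное отличие --- вы явно строите $v$ склейкой по атомам конечной алгебры, порожденной $A_{1},\dots,A_{k}$, тогда как в статье существование такого $v$ просто постулируется.
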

\begin{proof}
Достаточно показать, что для любых $k,\varepsilon>0$ найдется сохраняющий
меру изоморфизм $v:X\rightarrow X\times X$ такой, что 
\[
v^{-1}\left(T^{g}\times S^{g}\right)v\in\left(\mathrm{a}_{k},T^{g},\varepsilon\right),
\]
 для любого $g\in\mathcal{G}$.

Возьмем $v$ таким, что $vA_{i}=A_{i}\times X$ для всех $i\leqslant k$.

Тогда, для любых $g$ и $i,j\leqslant k$, имеем
\[
\mu\left(v^{-1}\left(T^{g}\times S^{g}\right)vA_{i}\cap A_{j}\right)=\mu\otimes\mu\left(\left(T^{g}\times S^{g}\right)A_{i}\times X\cap vA_{j}\right)=
\]
\[
=\mu\otimes\mu\left(\left(T^{g}A_{i}\times X\right)\cap\left(A_{j}\times X\right)\right)=\mu\left(T^{g}A_{i}\cap A_{j}\right).
\]
Следовательно, 
\[
\mathrm{a}_{k}\left(v^{-1}\left(T^{g}\times S^{g}\right)v,T\right)=
\]
\[
=\sum_{i,j\leqslant k}\frac{1}{2^{i+j}}\left|\mu\left(v^{-1}\left(T^{g}\times S^{g}\right)vA_{i}\cap A_{j}\right)-\mu\left(T^{g}A_{i}\cap A_{j}\right)\right|=0.
\]
\end{proof}
\begin{claim}
\label{=000443=000442=000432:=000020=000413=00043B=000430=000437=00043D=000435=000440-=000412=000435=000439=000441-=000422=000443=000432=000435=00043D=00043E}Пространство
$\mathcal{M}_{\mathcal{G},\Gamma}$ обладает слабым рохлинским свойством. 
\end{claim}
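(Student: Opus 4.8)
План состоит в том, чтобы представить множество действий с плотной орбитой в виде счётного пересечения открытых всюду плотных множеств и затем сослаться на теорему Бэра. Поскольку пространство $\mathcal{M}_{\mathcal{G},\Gamma}$ сепарабельно (теорема~\ref{GL}), я бы зафиксировал счётную базу его топологии $\left\{V_i\right\}_{i\in\mathbb{N}}$ и для каждого $i$ положил
\[
W_i=\left\{T\in\mathcal{M}_{\mathcal{G},\Gamma}\ :\ U^{-1}TU\in V_i\ \text{при некотором}\ U\in\mathcal{A}\right\},
\]
то есть $W_i$ --- множество действий, орбита которых пересекает $V_i$. Так как $\left\{V_i\right\}$ --- база, действие имеет плотную орбиту в точности тогда, когда его орбита пересекает каждое $V_i$, поэтому множество действий с плотной орбитой равно $\bigcap_i W_i$. Останется проверить, что каждое $W_i$ открыто и всюду плотно: тогда в силу полноты пространства (теорема~\ref{GL}) и теоремы Бэра пересечение $\bigcap_i W_i$ всюду плотно, то есть массивно, а это и есть слабое рохлинское свойство.

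Открытость множества $W_i$ я бы вывел из доказанной выше непрерывности отображения $T\mapsto U^{-1}TU$ при фиксированном $U$. При каждом фиксированном $U\in\mathcal{A}$ множество $\left\{T:U^{-1}TU\in V_i\right\}$ есть прообраз открытого $V_i$ при непрерывном отображении, а потому открыто; следовательно, $W_i=\bigcup_{U\in\mathcal{A}}\left\{T:U^{-1}TU\in V_i\right\}$ открыто как объединение открытых множеств.

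Основная трудность --- доказательство всюду плотности $W_i$, и здесь я бы воспользовался леммой о том, что сопряжённые прямому произведению $T\times S$ приближают $T$. Возьмём произвольное непустое открытое множество $O$, выберем в нём точку $T_0$, а в $V_i$ --- произвольную точку $S_0$, и рассмотрим действие $R=T_0\times S_0$ (отождествляя $X\times X$ с $X$). По указанной лемме сопряжённые к $R$ приближают $T_0$ с любой точностью, поэтому некоторое $U_1^{-1}RU_1$ лежит в $O$. С другой стороны, перестановка сомножителей $(x,y)\mapsto(y,x)$ (после отождествления $X\times X$ с $X$ задающая элемент $\mathcal{A}$) сопрягает $R=T_0\times S_0$ с $S_0\times T_0$, так что орбиты этих двух действий совпадают; применяя ту же лемму к паре $S_0,T_0$, получаю, что сопряжённые к $R$ приближают и $S_0$, а значит некоторое $U_2^{-1}RU_2$ лежит в $V_i$. Положив $T=U_1^{-1}RU_1$, я получаю точку $T\in O$, орбита которой совпадает с орбитой $R$ и потому содержит $U_2^{-1}RU_2\in V_i$; тем самым $T\in W_i\cap O$, и множество $W_i$ всюду плотно. Именно этот шаг (плотность, опирающаяся на тензорную лемму и симметрию перестановки сомножителей) я считаю ключевым, тогда как открытость и переход к пересечению по Бэру носят рутинный характер.
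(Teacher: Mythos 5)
Ваше доказательство корректно, и первая его половина (разложение множества действий с плотной орбитой в пересечение $\bigcap_{i}W_{i}$ и открытость каждого $W_{i}$ через непрерывность отображения $T\mapsto U^{-1}TU$ при фиксированном $U$) совпадает с авторской. Однако ключевой шаг --- плотность --- у вас устроен иначе. В статье плотность пересечения доказывается напрямую, без теоремы Бэра: берётся счётное всюду плотное семейство $\left\{ T_{i}\right\} $ (оно существует по уже доказанной сепарабельности) и предъявляется один элемент с плотной орбитой --- бесконечное произведение $\times_{i}T_{i}$, сопряжённые которого по лемме о прямых произведениях приближают каждый множитель $T_{j}$, а значит и любой элемент пространства; поскольку все точки орбиты этого действия сами имеют плотную орбиту, искомое множество содержит плотную орбиту и потому плотно. Вы же доказываете плотность каждого $W_{i}$ в отдельности, используя лишь двукратное произведение $R=T_{0}\times S_{0}$ и то, что перестановка сомножителей сопрягает $T_{0}\times S_{0}$ с $S_{0}\times T_{0}$ (этот приём неявно присутствует и у автора, когда лемма о двух сомножителях применяется к бесконечному произведению), после чего ссылаетесь на теорему Бэра, законно опираясь на полноту из теоремы \ref{GL}. Ваш путь выигрывает в том, что не требует ни изоморфизма $X$ со своей счётной тензорной степенью, ни факта о перемешивании счётных произведений --- достаточно двух сомножителей; авторский путь, напротив, вовсе обходится без категорных соображений, так как по принятому в статье определению массивности достаточно установить плотность самого пересечения. Единственная мелочь, общая для обоих рассуждений: базу $\left\{ V_{i}\right\} $ следует считать состоящей из непустых множеств (иначе выбор $S_{0}\in V_{i}$ невозможен), что не ограничивает общности.
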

\begin{proof}
Пусть $\left\{ \mathcal{O}_{i}\right\} $ --- счетная база пространства.
Тогда, множество элементов, сопряженные с которыми всюду плотны, представляется
в виде 
\[
\bigcap_{i}\left\{ T\mid\exists U\in\mathcal{A}:U^{-1}TU\in\mathcal{O}_{i}\right\} .
\]
По лемме \ref{=000443=000442=000432:=00043D=000435=00043F=000440=000435=000440=00044B=000432=00043D=00043E=000441=000442=00044C=000020=000441=00043E=00043F=000440=00044F=000436=000435=00043D=000438=00044F},
это множество является счетным пересечением открытых множеств. Докажем
его плотность. Для этого достаточно найти хотя бы один его элемент
с плотной орбитой. Так как пространство сепарабельно, в нем найдется
счетный всюду плотный набор элементов $\left\{ T_{i}\right\} $. По
предыдущей лемме элементы, сопряженные прямому произведению $\times_{i}T_{i}$,
с любой точностью приближают каждый свой множитель, а значит и любой
элемент пространства.
\end{proof}

\end{document}